\newcommand{\R}{\mathbb R}
\newcommand{\Z}{\mathbb Z}
\newcommand{\T}{\mathbb T}
\numberwithin{equation}{section}
\newtheorem{theorem}{Theorem}[section]
\newtheorem{proposition}[theorem]{Proposition}
\newtheorem{remark}[theorem]{Remark}
\newtheorem{lemma}[theorem]{Lemma}
\begin{document}
\title[Higher order 
 nonlinear Schr\"odinger equation]{Global well-posedness for the higher order non-linear Scrh\"odinger equation in modulations spaces}

\address{Universidade Federal do Rio de Janeiro}
\author{X. Carvajal}
\author{P. Gamboa}
\author{R. Santos}
\email{carvajal@im.ufrj.br}
\email{pgamboa@im.ufrj.br}
\email{raphaelsantos@macae.ufrj.br}


\maketitle

\begin{abstract} We consider the
 initial value problem (IVP) associated to a higher order nonlinear Schr\"odinger (h-NLS)  equation
\begin{equation*} 
\partial_{t}u+ia
\partial^{2}_{x}u+ b\partial^{3}_{x}u=2ia|u|^{2}u+6b
|u|^{2}\partial_{x}u,
 \quad x,t \in \R, 
\end{equation*}
for given data in the modulation space $M_s^{2,p}(\R)$. Using ideias of Killip, Visan,  Zhang, Oh, Wang, we prove that the IVP associated to the h-NLS
equation is globally well-posed in the modulation spaces $M^{s,p}$ for $s\geq\frac14$ and $p\geq2$.

\end{abstract}

\noindent
Key-words: Schr\"{o}dinger equation, Korteweg-de Vries equation, Initial value problem, Well-posedness,  Sobolev spaces, Fourier-Lebesgue spaces,  Modulation spaces.

\section{Introduction}

In this work we  consider the initial value problem (IVP) associated to a higher order nonlinear Schr\"odinger (h-NLS)
 equation
\begin{equation}\label{xhonse}
\begin{cases} 
\partial_{t}u+ia 
\partial^{2}_{x}u+ b \partial^{3}_{x}u=2ia|u|^{2}u+6b
|u|^{2}\partial_{x}u,
 \quad x,t \in \R,\\
  u(x,0) = u_0(x),
 \end{cases}
\end{equation}
where $a,b \in \R$ and $u = u(x, t)$ is a complex valued function.
 
The main objective of this work is to investigate the global well-posedness issues of the IVP \eqref{xhonse} in the modulation spaces.  




In recent time, well-posedness of the IVPs associated to the nonlinear dispersive equations are being studied in some other scales of the function spaces than the usual $L^2$ based Sobolev spaces $H^s(\R)$ viz., the Fourier-Lebesgue spaces $\mathcal{F}L^{s,p}(\R)$ with norm 
 $$
 \|u\|_{\mathcal{F}L^{s,p}} =\|\langle \xi\rangle^s \widehat{u}(\xi)\|_{L^p},
 $$
  and modulation spaces  $M_s^{r,p}(\R)$  with norm given by \eqref{def-2} (below). More specifically we mention the local well-posedness result for the modified Korteweg-de Vries (mKdV) equation in $\mathcal{F}L^{s,p}(\R)$ for $s\geq \frac1{2p}$ with $2\leq p<4$ obtained in \cite{Gr-04} and its improvement for the same range of $s$ with $2\leq p <\infty$ in \cite{GV-09}. For other discussion about these results we refer the readers to \cite{Oh-Wang} where the authors considered the IVP associated to the complex-valued mKdV equation in the modulation spaces  $M_s^{2,p}(\R)$ and proved local well-posedness for $s\geq \frac14$ with $2\leq p <\infty$. Quite recently, Oh and Wang \cite{OW-20} introduced a new function space $H\!M^{\theta,p}$ whose norm is given by the $\ell^p$-sum of the modulated $H^{\theta}$-norm of a given function and agrees with the modulation space $M^{2,p}(\R)$ on the real line and Fourier-Lebesgue space $\mathcal{F}L^p(\T)$ on the circle. The authors in \cite{OW-20} proved that the cubic NLS is globally well-posed in $M^{2,p}(\R)$ for any $p<\infty$ and the normalized cubic NLS is globally well-posed in $\mathcal{F}L^p(\T)$ for any $p<\infty$. As far we know, there are no known results about the global well-posedness issues for the IVP \eqref{xhonse} for given data in the modulation spaces.
 
Our interest here is in  addressing the well-posedness issues for the h-NLS equation \eqref{xhonse}  with given  data in the modulation spaces. We obtain the global well-posedness result for the IVP \eqref{xhonse} in the same spirit to that for the complex mKdV equation  \cite{Oh-Wang}.



The h-NLS is a particular case of the more general equation (honse equation)
\begin{equation}\label{honse}
\begin{cases} 
\partial_{t}u+ia 
\partial^{2}_{x}u+ b \partial^{3}_{x}u+ic_1|u|^{2}u+c_2
|u|^{2}\partial_{x}u+du^2\partial_x\overline{u}=0,
 \quad x,t \in \R,\\
  u(x,0) = u_0(x),
 \end{cases}
\end{equation}
when $d=0$, $c_1=-2a$, $c_2=-6b$. The honse equation \eqref{honse}, which is a   mixed model of complex Korteweg-de Vries (KdV) and Schr\"{o}dinger type was
 proposed by  Hasegawa and Kodama in \cite{[H-K]} and \cite{[Ko]} to describe
 the  nonlinear propagation of pulses in optical fibers. The IVP (\ref{honse})
 has also
been studied by several authors in recent years. Taking $a, b, c_1, c_2$ and $d$
 as real constants, Laurey
\cite{[C2]} proved that the IVP (\ref{honse}) for given
data in $H^s(\R)$ is locally well-posed when $s>\frac34$ and
globally well-posed when $s\geq 1$. Later, using the techniques
developed by Kenig, Ponce and Vega \cite{[KPV1]}, Staffilani
\cite{[G]} improved the result in \cite{[C2]} by showing that
the IVP  (\ref{honse}) is locally well-posed in
$H^s(\R),\,s\geq \frac14$. Using the method of almost conserved quantities and the I-method introduced by Colliander et. al. \cite{CKSTT},
Carvajal \cite{XC-06} proved the sharp global well-posedness of 
IVP associated to (\ref{honse}) in $H^s(\R)$ for $s >
\frac14$. The IVP  ~(\ref{honse}) when $a$ and $b$ are
functions of $t\in [-T_0, T_0]$ for some $T_0>0$ and $b(t) \neq 0$
for all $t\in [-T_0, T_0]$ has also been a matter of study (see for
instance \cite{[C2]},  \cite{CA1} and \cite{CL1}). In \cite{CP} the authors proved the local well-posedness to the IVP \eqref{honse} in the modulation spaces $M^{s,p}$ for $s\geq\frac14$ and $p\geq2$.

The IVP \eqref{honse} posed on the circle $\T$ is also studied in the literature, see for instance \cite{Takaoka} and references therein.


As far as we know, no work is available in the literature that deals with the global well-posedness issues of the IVP \eqref{honse} for given data in the modulation spaces. Motivated by the recent works in \cite{Oh-Wang} and \cite{CP}, our interest in this work is to address this issue. In fact, we prove the global well-posedness result for the IVP \eqref{honse} for given data in the modulation space $M_s^{2,p}(\R)$, whenever $s\geq\frac14$. This is the content the following theorem which is the main result of this work.

\begin{theorem}\label{main-th2}
For given $s\geq \frac14$ and $2\leq p<\infty$, the IVP \eqref{xhonse} is globally well-posed in the modulation space $M_s^{2,p}(\R) $.
\end{theorem}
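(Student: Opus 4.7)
The plan is to combine the local well-posedness theory of \cite{CP} for \eqref{xhonse} in $M_s^{2,p}(\R)$ with a frequency-localised a priori bound for the $M_s^{2,p}$ norm of the solution, following the global extension strategy of Oh--Wang \cite{Oh-Wang} for the complex mKdV equation.

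First I would recall from \cite{CP} that for $u_0\in M_s^{2,p}(\R)$ with $s\geq \frac14$ and $2\leq p<\infty$, the IVP \eqref{xhonse} admits a unique local solution $u\in C([-T_0,T_0];M_s^{2,p})$ with $T_0=T_0(\|u_0\|_{M_s^{2,p}})>0$. Letting $P_n$ ($n\in\Z$) denote the unit-cube frequency projectors associated with the defining norm
$$\|u\|_{M_s^{2,p}}\simeq\Bigl(\sum_{n\in\Z}\langle n\rangle^{sp}\,\|P_n u\|_{L^2}^p\Bigr)^{1/p},$$
the goal is to propagate this norm uniformly on $[0,T_0]$. Differentiating $\|P_n u(t)\|_{L^2}^2$ in time and using that the linear operator $ia\partial_x^2+b\partial_x^3$ is skew-adjoint, one reduces to estimating the nonlinear interaction
$$I_n(t)=\mathrm{Re}\int_0^t\!\!\int_{\R}\overline{P_n u(\tau)}\,P_n\bigl(2ia|u|^2 u+6b|u|^2\partial_x u\bigr)(\tau)\,dx\,d\tau.$$

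The multilinear analysis of $I_n$ is the technical heart. The pure cubic contribution is handled frequency by frequency, distributing $P_n$ among four factors via a paraproduct decomposition and closing the $\ell^{p/2}_n$-sum using the $\langle n\rangle^{2s}$ weight. The derivative nonlinearity $|u|^2\partial_x u$ is the main obstacle: a direct bound would lose a derivative, so one must symmetrise the resulting quadrilinear form and integrate by parts so that $\partial_x$ lands on the factor of largest frequency; a single power of that largest frequency is then absorbed by the weight $\langle n\rangle^{2s}$ precisely at the threshold $s\geq\frac14$. Summing in $\langle n\rangle^{2s}\ell_n^{p/2}$ should yield a closed inequality of the form
$$\|u(t)\|_{M_s^{2,p}}^2\leq\|u_0\|_{M_s^{2,p}}^2+C\int_0^t F\bigl(\|u(\tau)\|_{M_s^{2,p}}\bigr)\,d\tau,$$
from which a continuity argument produces an a priori bound on any bounded time interval. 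Iterating the local theory with this bound then gives the global solution claimed in Theorem~\ref{main-th2}.

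The hard part will be carrying out the frequency-localised multilinear estimates for the derivative nonlinearity inside the $\ell^p$-structured modulation norm, where Plancherel is unavailable and one has to work directly with the unit-cube decomposition. Extending the Oh--Wang argument from the purely KdV-type complex mKdV setting of \cite{Oh-Wang} to \eqref{xhonse} additionally requires handling the Schr\"odinger and KdV linear parts together with the coupled nonlinearities $|u|^2 u$ and $|u|^2\partial_x u$, so the normal-form/integration-by-parts step must be tailored to the combined resonance structure of the h-NLS equation.
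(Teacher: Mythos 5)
There is a genuine gap: the mechanism you propose for the a priori bound is not the one that can work here, and as sketched it fails on two counts. First, differentiating $\|P_n u(t)\|_{L^2}^2$ and estimating the quartic interaction $I_n(t)$ can at best produce an inequality of the form $\|u(t)\|_{M_s^{2,p}}^2\leq \|u_0\|_{M_s^{2,p}}^2+C\int_0^t F(\|u(\tau)\|_{M_s^{2,p}})\,d\tau$ with $F$ superlinear (the interaction is quartic in $u$); a continuity/Gronwall argument with superlinear $F$ only controls the solution up to a finite time determined by the data, so it does not yield the uniform-in-time bound needed to iterate the local theory globally. Second, your treatment of the derivative nonlinearity is quantitatively off: integrating by parts so that $\partial_x$ falls on the highest frequency costs a full power of that frequency, while the available weight $\langle n\rangle^{2s}$ at the threshold $s=\frac14$ supplies only half a power; the exponent $\frac14$ in this problem comes from smoothing effects in Bourgain-type spaces for the \emph{local} theory, not from absorbing a derivative into the modulation weight, so the multilinear estimate you describe would not close at $s=\frac14$.

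The paper's global argument is of a different nature and relies on the integrable structure of \eqref{xhonse} (note the exact coefficients $2ia$ and $6b$): it introduces the perturbation determinant $\alpha(u(t),k)$ of \eqref{definalpha}, proves via operator/trace identities that it is exactly conserved by the h-NLS flow (Theorem \ref{alphaHonse}), and applies this conservation not to $u$ itself but to the Galilean-boosted solutions $u_n(x,t)=e^{-inx}e^{i(an^2+2bn^3)t}u(x-(2an+3bn^2)t,t)$ of \eqref{tta}, one for each $n\in\Z$. For small data the quadratic part of $\alpha(u_n)$ is comparable to the frequency-localized mass $\int |\widehat u(\xi,t)|^2\langle\xi-n\rangle^{-2}d\xi$ and the tail of the series is quadratically small, so summing in $\ell_n^{p/2}$ gives $\|u(t)\|_{M^{2,p}}\lesssim\|u(0)\|_{M^{2,p}}$ uniformly in $t$ with no Gronwall growth; large data are handled by scaling, and the weighted case $s\geq\frac14$ follows as in Oh--Wang. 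In other words, the Oh--Wang strategy you invoke is itself the Killip--Visan--Zhang conservation-law method, not a paraproduct/normal-form energy estimate, and without an ingredient of this type (or some other source of uniform-in-time control) your proposal cannot reach global well-posedness at the stated regularity.
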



We present the organization of this work. 
 In Section \ref{sec-3} we introduce the function spaces, their properties  and record some preliminary results. Section \ref{sec-4} is devoted to properties of traces, multiplication of operators and we derive the key conservation of the perturbation determinant $\alpha(k,u)$ (see definition \eqref{definalpha}) that is fundamental to prove the main result of this work. In Section \ref{sec-5} we provide the proofs of an apriori estimate in modultion spaces and the main result of this paper.  We finish this section recording some principal notations that will be used throughout this work.\\

\noindent
{\textbf{Notations:}} We will use standard notations of the PDEs 
throughout this work.  We use $\widehat{f}$ to denote Fourier
transform and is defined by $\widehat{f}(\xi) = (2\pi)^{-1/2} \int
e^{-ix\xi}f(x)\,dx.$  We write $A\lesssim B$
if there exists a constant $c >0$ such that $ A \leq cB$, we also write $A \sim B$ if $A\lesssim B$ and $B\lesssim A$... etc.

\section{Function spaces and preliminary results}\label{sec-3} 
    As described in the previous section, the best global well-posedness result for the IVP \eqref{xhonse} for given data in the Sobolev space $H^s(\R)$, $s\geq 1/4$ was obtained in \cite{XC-06}, using the Fourier transform norm space $Z^{s,b}$ defined 
    for $s,b\in\R$, $Z^{s,b}$ is the Fourier transform restriction norm space introduced by Bourgain \cite{B-93}  with norm
 \begin{equation}
 \label{X-norm}
 \|u\|_{Z^{s,b}}:=\|\langle\xi\rangle^s\langle\tau-\phi(\xi)\rangle^b\widehat{u}(\xi, \tau)\|_{L^2_{\xi}L^2_{\tau}},
 \end{equation}
 where $\langle x\rangle:=1+|x|$ and $\phi(\xi) = b\xi^3+a \xi^2$ is the phase function associated to the h-NLS equation \eqref{xhonse}. We note that, for $b>\frac12 $ one has $Z^{s,b}\subset C(\R; H^s(\R))$                     and these spaces play a very important role in obtaining the well-posedness results for the IVP associated to the dispersive equations with low regularity Sobolev data.

Now, we move on to introduce modulation spaces on which we are interested to concentrate our work. For given $s\in \R$, $1\leq r, p\leq\infty$,  modulation spaces 
    $M_s^{r,p}(\R)$ are defined by \cite{Feich-1, Feich-2}   
    \begin{equation}
    \label{def-1}
      M_s^{r,p}(\R):=\{f\in \mathcal{S}'(\R) : \|f\|_{M_s^{r,p}}<\infty\},
      \end{equation}
where
\begin{equation}
    \label{def-2}\|f\|_{M_s^{r,p}}:= \|\langle n\rangle^s\|\psi_n(D)f\|_{L_x^r(\R)}\|_{\ell_n^p(\Z)},
\end{equation}
with $\psi\in \mathcal{S}(\R)$ such that
$${\mathrm{supp}}\, \psi\subset[-1, 1], \qquad \sum_{k\in\Z}\psi(\xi-k)=1,$$
and $\psi_n(D)$ is the Fourier multiplier operator with symbol
$$\psi_n(\xi):=\psi(\xi-n).$$

For given $n\geq 1$, let $P_N$ be the Littlewood-Paley projector on the frequencies $\{|\xi|\sim N\}$.

For $n\in \Z$ we define
\begin{equation}
\label{proj-2}
\widehat{\Pi_n f}(\xi):=\psi_n(\xi)\widehat{f}(\xi).
\end{equation}

For any $1\leq q\leq p\leq \infty$, from Bernstein's inequality we have the followings
\begin{equation}
\label{bern-1}
\begin{split}
&\|P_N f\|_{L_x^p}\lesssim N^{\frac1q-\frac1p}\|f\|_{L_x^q},\\
&\|\Pi_n f\|_{L_x^p}\lesssim\|f\|_{L_x^q}.
\end{split}
\end{equation}

Now, we introduce the Bourgain's type space $X_p^{s,b}$ adapted to the modulation space $M_s^{2,p}(\R)$ with norm given by
\begin{equation}
\label{bg-m}
\|f\|_{X_p^{s,b}}:= \Big(\sum_{n\in\Z}\langle n\rangle^{sp}\|\langle\tau-\xi^3\rangle^b\widehat{f}(\xi,\tau)\|^p_{L^2_{r,\xi}(\R\times[n, n+1])}\Big)^{\frac1p}\sim \|\|\Pi_nf\|_{X^{s,b}}\|_{\ell^p_n}.
\end{equation}
For $p=2$, the space $X_p^{s,b}$ simply reduces to the usual Bourgain's space $X^{s,b}$. Note that, for $b>\frac12$, one has the following inclusion
\begin{equation}
\label{sob-1}
X_p^{s,b}\subset C(\R: M_s^{2,p}(\R)).
\end{equation}

Also  the following estimates hold
\begin{equation}
\label{incl-2}
\|x_n\|_{\ell^p_n}\leq \|x_n\|_{\ell^q_n}, \qquad p\geq q\geq 1,
\end{equation}

\begin{equation}
\label{incl-2}
\|u\|_{X_p^{s,b}}\leq \|u\|_{X_q^{s,b}}, \qquad p\geq q\geq 1,
\end{equation}

\begin{equation}
\label{incl-3}
\|P_Nu\|_{X_q^{s,b}}\lesssim N^{\frac1q-\frac1p} \|P_Nu\|_{X_p^{s,b}}, \qquad p\geq q\geq 1.
\end{equation}

For a given time interval $I$, the local-in-time version  $X^{s,b}_p(I)$ of $X^{s,b}_p$ are defined with the norm
$$\|f\|_{X^{s,b}_p(I)}:=\inf\{\|g\|_{X^{s,b}_p}: g|_{I}=f\}.$$

In what follows we record some preliminary results. We start with the estimates that the unitary group satisfies in the $X^{s,b}_p$ spaces from \cite{Oh-Wang}.

\begin{lemma}\label{lem-1}
Let $s, b\in\R$ and $1\leq p<\infty$. Then for any $0<T\leq 1$ the following estimate holds
\begin{equation}
\label{est-311}
\Big\|e^{-t\partial_x^3}f\Big\|_{X^{s,b}_p([0, T])} \lesssim \|f\|_{M^{2,p}_s}.
\end{equation}
\end{lemma}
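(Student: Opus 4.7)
The plan is to construct a standard Bourgain-type linear estimate by extending the free solution $e^{-t\partial_x^3}f$ globally in time using a smooth cutoff, computing its spacetime Fourier transform explicitly, and then matching up the $X^{s,b}_p$ norm of the extension to the modulation norm of $f$.

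First, I would pick a Schwartz function $\eta \in \mathcal{S}(\R)$ satisfying $\eta(t) = 1$ for $t \in [0,1]$ (which covers $[0,T]$ since $T \leq 1$). Setting $u(x,t) := \eta(t)\bigl(e^{-t\partial_x^3}f\bigr)(x)$ gives a global extension of $e^{-t\partial_x^3}f|_{[0,T]}$, so by the definition of the restricted norm it suffices to prove
\begin{equation*}
\|u\|_{X^{s,b}_p} \lesssim \|f\|_{M^{2,p}_s}.
\end{equation*}

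The spacetime Fourier transform factorizes cleanly:
\begin{equation*}
\widehat{u}(\xi,\tau) = \widehat{\eta}(\tau - \xi^3)\,\widehat{f}(\xi).
\end{equation*}
On each strip $\xi \in [n,n+1]$, I would insert this into the defining norm \eqref{bg-m} and change variables $\sigma = \tau - \xi^3$ in the $\tau$-integral, which is a translation for each fixed $\xi$. This pulls out the $\xi$-dependence and yields
\begin{equation*}
\bigl\|\langle\tau-\xi^3\rangle^b \widehat{u}(\xi,\tau)\bigr\|_{L^2_{\tau,\xi}(\R\times[n,n+1])}
\;\leq\; C_{\eta,b}\,\|\widehat{f}\|_{L^2_\xi([n,n+1])},
\end{equation*}
where $C_{\eta,b} := \|\langle\sigma\rangle^b\widehat{\eta}(\sigma)\|_{L^2_\sigma} < \infty$ by Schwartz decay of $\widehat{\eta}$.

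The last step is to relate $\|\widehat{f}\|_{L^2([n,n+1])}$ to $\|\psi_n(D)f\|_{L^2_x}$. Since $\psi$ is supported in $[-1,1]$ and $\sum_k \psi(\xi-k) \equiv 1$, on $[n,n+1]$ only finitely many $\psi_m$ are nonzero, so
\begin{equation*}
\|\widehat{f}\|_{L^2([n,n+1])} \lesssim \sum_{|m-n|\leq C}\|\psi_m(D)f\|_{L^2_x}.
\end{equation*}
Taking the $\ell^p_n$-norm with weight $\langle n\rangle^s$, using $\langle n\rangle \sim \langle m\rangle$ for $|m-n|\leq C$, and invoking the discrete triangle/Young inequality reduces the sum over $n$ to the sum defining $\|f\|_{M^{2,p}_s}$, giving the claim.

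The argument is essentially routine; the only point requiring a small amount of care is the translation invariance step, where one must confirm that $C_{\eta,b}$ is independent of $\xi$ (it is, by the translation-invariance of Lebesgue measure), and the bookkeeping with the almost-disjoint partition of unity so that no unwanted loss in the weight $\langle n\rangle^s$ occurs. No dispersive or multilinear input is required here, since this is purely a linear, frequency-localized estimate on the free propagator.
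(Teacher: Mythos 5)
Your argument is correct. Note that the paper itself gives no proof of this lemma---it is quoted from \cite{Oh-Wang}---and the proof there is essentially the one you give: either one cuts off in time and computes $\widehat{\eta(t)e^{-t\partial_x^3}f}(\xi,\tau)=\widehat{\eta}(\tau-\xi^3)\widehat{f}(\xi)$ as you do, or, equivalently, one applies the classical homogeneous $X^{s,b}$ estimate $\|\eta(t)e^{-t\partial_x^3}g\|_{X^{s,b}}\lesssim\|g\|_{L^2}$ to each piece $g=\Pi_n f$ and takes the weighted $\ell^p_n$ sum using the equivalence in \eqref{bg-m}; your strip-by-strip computation and the finite-overlap bookkeeping with the partition $\{\psi_m\}$ accomplish exactly this, with a constant $C_{\eta,b}$ independent of $T\leq 1$, so the proof is complete and matches the standard route.
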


\begin{lemma}\label{lem-2}
Let $s\in\R$, $-\frac12<b'\leq 0\leq b\leq 1+b'$ and $1\leq p<\infty$. Then for any $0<T\leq 1$ the following estimate holds
\begin{equation}
\label{est-312}
\left\|\int_0^t e^{-(t-t')\partial_x^3}F(t')dt'\right\|_{X^{s,b}_p([0, T])} \lesssim T^{1+b'-b}\|F\|_{X^{s,b'}_p([0, T])}.
\end{equation}
\end{lemma}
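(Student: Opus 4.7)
My plan is to reduce Lemma \ref{lem-2} to the classical inhomogeneous Bourgain estimate on the scale of $X^{s,b}$-spaces and then reassemble via the $\ell^p_n$-sum encoded in the equivalent norm $\|f\|_{X^{s,b}_p}\sim\|\,\|\Pi_n f\|_{X^{s,b}}\,\|_{\ell^p_n}$ of \eqref{bg-m}. The key observation making this reduction possible is that the Airy propagator $e^{-t\partial_x^3}$ is a Fourier multiplier in $x$ and therefore commutes with every frequency projection $\Pi_n$, so
\[
\Pi_n\!\left(\int_0^t e^{-(t-t')\partial_x^3}F(t')\,dt'\right)=\int_0^t e^{-(t-t')\partial_x^3}\Pi_n F(t')\,dt'.
\]
Moreover, on the Fourier support of $\Pi_n F$ the weight $\langle\xi\rangle$ is comparable to $\langle n\rangle$, so the weight $\langle n\rangle^s$ behaves as a constant on each block and can be freely pulled across the analysis.

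With that reduction in hand, I would invoke the well-known single-block inhomogeneous estimate
\[
\Big\|\int_0^t e^{-(t-t')\partial_x^3}G(t')\,dt'\Big\|_{X^{0,b}([0,T])}\lesssim T^{1+b'-b}\|G\|_{X^{0,b'}([0,T])},
\]
valid for $-\tfrac12<b'\leq 0\leq b\leq 1+b'$ (Bourgain \cite{B-93}), with $G=\Pi_n F$. Should one need to reproduce this bound from scratch, the standard route is: extend $F$ beyond $[0,T]$ realizing the infimum in the local norm, insert a smooth temporal cutoff $\eta(t/T)$ with $\eta\equiv1$ on $[-1,1]$ and supported in $[-2,2]$, pass to the space-time Fourier transform, and split the resulting oscillatory integral into the high-modulation regime $\{|\tau'-\xi^3|\gtrsim 1/T\}$ (where Taylor expansion of the two exponentials together with the Schwartz decay of $\widehat{\eta}$ deliver the bound) and the low-modulation regime $\{|\tau'-\xi^3|\lesssim 1/T\}$ (where one uses $|e^{it(\tau'-\xi^3)}-1|\leq|t(\tau'-\xi^3)|$). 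The range $-\tfrac12<b'\leq0\leq b\leq 1+b'$ is precisely what keeps the modulation weights square-integrable at the Cauchy-Schwarz step and produces the gain $T^{1+b'-b}$.

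Finally, multiplying the per-block estimate by $\langle n\rangle^s$, raising to the $p$-th power, and summing over $n\in\Z$ yields \eqref{est-312} via the equivalence \eqref{bg-m}. The one point requiring care is verifying that the implicit constants in the single-block estimate are genuinely uniform in $n$; this is automatic, however, because the modulation variable $\tau-\xi^3$ appears in both norms and a translation $\xi\mapsto\xi+n$ of the Fourier variable reduces every block to the one centred at the origin, so the constants do not see $n$. Overall I expect no substantial obstacle: Lemma \ref{lem-2} is essentially the per-block Bourgain estimate bundled together in an $\ell^p_n$ envelope, and the only nontrivial ingredient that must be quoted or reproved is the classical inhomogeneous $X^{0,b}$-bound above.
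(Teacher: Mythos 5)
Your argument is correct, and it is essentially the expected one: the paper itself does not prove Lemma \ref{lem-2} but simply records it from Oh--Wang \cite{Oh-Wang}, where the proof is exactly this blockwise reduction — since $e^{-t\partial_x^3}$ and the cutoff are Fourier multipliers commuting with each $\Pi_n$, the classical inhomogeneous $X^{s,b}$ estimate applies to each block with constants independent of $n$ (the constant in the classical estimate does not depend on the frequency localization of the input), and the $\ell^p_n$ sum in \eqref{bg-m} reassembles the claim. The only point to phrase carefully is the handling of the restriction-norm infima: one should first fix a single global extension $G$ of $F$ nearly attaining $\|F\|_{X^{s,b'}_p([0,T])}$, apply the per-block estimate to $\Pi_n G$, and only at the end pass to the infimum defining the left-hand local norm; choosing block-by-block extensions of $\Pi_n F$ first, as your wording suggests, would require an extra (routine, via fattened projections) gluing step to recover a single admissible extension.
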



\section{Operators and Traces}\label{sec-4}
Let $f \in S(\R)$, we define the linear operator $T$ com kernel $K_T\in L^2(\R^2)$,
\begin{equation}\label{HS}
Tf(x):=\int_{\R} K_T(x,y) f(y) dy. 
\end{equation}
Observe that $T: L^2 \to L^2$, is a bounded linear operator, in fact using Minkowsky and Cauchy-Schwartz inequalities holds $\|T\|_{L^2 \to L^2} \leq \|K_T\|_{ L^2(\R^2)}$. We define the trace  of the operator T as:
$$
\text{tr}(T)=\int_{\R} K_T(x,x)  dx.
$$
Using Fubini's Theorem in $\langle Tf, g \rangle=\int_{\R} Tf(x) \overline{g(x)} dx$, $f,g \in S(\R)$, we obtain
\begin{equation}\label{kerAdj}
K_{T^*}(x,y)=\overline{K_{T}(y,x)}.
\end{equation}
Observe that if $T_j$ has kernel $K_j$, $j=1,2$, then 
$$
T_1T_2f(x)=\int_{\R^2} K_1(x,y) K_2(y,z) f(z) dy dz=\int_{\R}\left( \int_{\R} K_1(x,y) K_2(y,z) dy\right) f(z) dz, 
$$
thus $T_1T_2$ has kernel and trace
$$
K(x,z)=\int_{\R} K_1(x,y) K_2(y,z) dy, \quad \text{tr}(T_1T_2)=\int_{\R^2} K_1(x,y) K_2(y,x) dy dx.
$$
Using Fubini's Theorem, we have
\begin{equation}\label{ConmTr}
\text{tr}(T_1T_2)=\text{tr}(T_2T_1).
\end{equation}
We also set
\begin{align}\label{definNormaoper}
\|T\|^2=\text{tr}(T T^* )=\int_{\R^2}|K_T(x,y)|^2 dx dy.
\end{align}
The operator \eqref{HS} is a Hilbert–Schmidt operator and this norm \eqref{definNormaoper} is the Hilbert–Schmidt norm.
In general if $T_j$ has kernel $K_j$, $j=1,2, \dots,n$, then
\begin{equation}\label{ProdTn}
T_1T_2\dots T_n f(x)=\int_{\R^n} K_1(x,x_1) K_2(x_1,x_2)\dots K_n(x_{n-1},x_n)f(x_n) dx_1dx_2\dots dx_n, 
\end{equation}
has kernel 
\begin{equation}\label{kernelTn}
K(x,x_n)=\int_{\R^{n-1}} K_1(x,x_1)K_2(x_1,x_2)\dots K_n(x_{n-1},x_n) dx_1\dots dx_{n-1},
\end{equation}
and trace
\begin{equation}\label{traceTn}
\text{tr}(T_1T_2\dots T_n)=\int_{\R^n} K_1(x,x_1)K_2(x_1,x_2)\dots K_n(x_{n-1},x) dx dx_1\dots dx_{n-1}.
\end{equation}
We also have
\begin{equation}\label{prodtraceTn}
|\text{tr}(T_1T_2\dots T_n)|\leq \prod_{j=1}^{n} \|T_j\|.
\end{equation}
Let $m(\xi)$ a real function, we defined the multiplier operator $M$ associated to $m$, as
$$
\widehat{M f}(\xi)=m(\xi)\widehat{f}(\xi).
$$
Let $u\in S(\R)$ and $M$ a multiplier operator associated to $m$,  we define other operators $Mu$ and $uM$ as follow $(Mu)f(x):=M(uf)(x)$, and $(uM)f(x)=u(x)(Mf)(x)$. 

If $M_j$ is a multiplier operator with multiplicator $m_j$, $j=1,2$, since that $M_1M_2=M_2M_1$ has multiplicator $m_1m_2=m_2m_1$, then 
\begin{equation}\label{ConmOper}
M_1M_2u=M_2M_1u\quad \textrm{and} \quad uM_1M_2=uM_2M_1.
\end{equation}
We have the following examples
\\
\\
\begin{tabular}{|c|c|c|}
\hline
Operador & Kernel,  $K(x,\xi)=$ & Trace \\
\hline
M & $m^{\vee}(x-\xi)$ &  \\
\hline
Mu & $m^{\vee}
(x-\xi) u(\xi)$ & $\int m^{\vee}
(0) u(x)dx=\left(\int m(x)dx \right) \left(\int u(x)dx\right)$ \\
\hline
uM & $u(x) m^{\vee}
(x-\xi) $  &$\int u(x) m^{\vee}
(0) dx=\left(\int m(x)dx\right) \left(\int u(x)dx\right)$ \\
\hline
uMv & $u(x) m^{\vee}
(x-\xi) v(\xi)$ &$\int u(x) m^{\vee}
(0)  v(x)dx=\left(\int m(x)dx \right) \left(\int u(x) v(x) dx\right)$\\
\hline
\end{tabular}
\\
\\
\\
Observe that using \eqref{ProdTn} and \eqref{kernelTn} we can define other operators as $M_1uM_2=M_1(uM_2)=(M_1u)M_2$ with kernel:
\begin{equation}\label{eqKxav}
K(x,z):=K(m_1,u,m_2)=\int_{\R} K_1(x,y) K_2(y,z) dy=\int_{\R} m_1^{\vee}(x-y) u(y) m_2^{\vee}
(y-z) dy,
\end{equation}
and using \eqref{traceTn}, with trace:
\begin{equation}
 \quad \text{tr}(M_1uM_2)=\int_{\R^2} K_1(x,y) K_2(y,x) dy dx=\int_{\R^2} m_1^{\vee}(x-y) u(y) m_2^{\vee}
(y-x) dydx.
\end{equation}
It is not difficult to see that if $m_1, m_2 \in L^2(\R)$, then $K(x,z)\in L^2(\R^2)$. 

In order to prove \eqref{Pedrx1}, we need the following elemental lemma
 \begin{lemma}\label{lemabasic}
 If $a,b>0$ and $a+b>1$, we have
\begin{equation}\label{lfc1-1}
\int_{\R} \dfrac{dx}{\langle x -\alpha \rangle^{a}\langle x -\beta \rangle^{b}} \lesssim \dfrac{1}{\langle \alpha -\beta \rangle^{c}},  \quad c=\min\{a,b, a+b-1\}.
\end{equation}
\end{lemma}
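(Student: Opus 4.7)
The plan is the classical three-region splitting argument for convolution-type bounds, which in this setting reduces to a one-dimensional computation. First I would translate the variable by setting $y:=x-\alpha$ and $\mu:=\alpha-\beta$, so the integral becomes
\[
I(\mu) \;:=\; \int_{\R}\frac{dy}{\langle y\rangle^{a}\langle y+\mu\rangle^{b}},
\]
and the claim is $I(\mu)\lesssim \langle\mu\rangle^{-c}$. By the symmetry $(a,\alpha)\leftrightarrow(b,\beta)$ (which sends $\mu\to-\mu$) I may assume $a\le b$. The case $|\mu|\le 2$ is immediate: since $a,b>0$ there is no local singularity, and $\langle y\rangle^{a}\langle y+\mu\rangle^{b}\gtrsim\langle y\rangle^{a+b}$ for large $|y|$, so $a+b>1$ forces $I(\mu)=O(1)\sim\langle\mu\rangle^{-c}$.

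For $|\mu|>2$ I would split $\R$ into three pieces
\[
A=\{|y|\le |\mu|/4\},\qquad B=\{|y+\mu|\le|\mu|/4\},\qquad C=\R\setminus(A\cup B),
\]
on each of which one of the two $\langle\cdot\rangle$-factors is comparable to $|\mu|$. On $A$ one has $\langle y+\mu\rangle\sim|\mu|$, so pulling out $|\mu|^{-b}$ leaves $\int_{|y|\le|\mu|/4}\langle y\rangle^{-a}\,dy$, which is $O(1)$ when $a>1$ and $O(|\mu|^{1-a})$ when $a<1$; either way the contribution is $\lesssim|\mu|^{-b}+|\mu|^{-(a+b-1)}\lesssim\langle\mu\rangle^{-c}$. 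The estimate on $B$ is symmetric and gives $\lesssim|\mu|^{-a}+|\mu|^{-(a+b-1)}\lesssim\langle\mu\rangle^{-c}$.

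The region $C$ is where both $\langle y\rangle$ and $\langle y+\mu\rangle$ are of size $\gtrsim|\mu|>2$, so in particular $\langle y\rangle\sim|y|$ and $\langle y+\mu\rangle\sim|y+\mu|$. Rescaling $y=\mu t$ then yields
\[
\int_{C}\frac{dy}{\langle y\rangle^{a}\langle y+\mu\rangle^{b}}\;\sim\;|\mu|^{1-a-b}\int_{|t|,\,|t+1|\ge 1/4}\frac{dt}{|t|^{a}|t+1|^{b}},
\]
and the remaining integral is a finite constant depending only on $a,b$: the potential singularities at $t=0$ and $t=-1$ are cut off by the domain, while $a+b>1$ ensures convergence at infinity. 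So $C$ contributes $\lesssim|\mu|^{-(a+b-1)}\le\langle\mu\rangle^{-c}$. Summing the three pieces completes the proof.

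The only real subtlety is the case analysis on whether $a$ and $b$ are larger or smaller than $1$ (which governs whether the inner integrals on $A$ and $B$ are bounded, logarithmic, or power-like), together with the borderline situation $a=1$ or $b=1$ where a harmless $\log\langle\mu\rangle$ factor appears; in applications this is standardly absorbed by replacing $c$ by $c-\varepsilon$, and is not a genuine obstacle.
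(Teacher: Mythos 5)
The paper states Lemma \ref{lemabasic} with no proof at all (it is called ``elemental''), so there is nothing of the authors' to compare your argument against. What you propose is the standard three-region splitting, and it is carried out correctly: the reduction to $I(\mu)=\int\langle y\rangle^{-a}\langle y+\mu\rangle^{-b}\,dy$, the trivial case $|\mu|\le 2$ (using $c=\min\{a,b,a+b-1\}>0$), the regions $A$ and $B$ where one factor is $\sim|\mu|$, and the region $C$ where rescaling produces the factor $|\mu|^{1-a-b}$ times a convergent integral, together give exactly the claimed bound whenever $a\neq 1$ and $b\neq 1$.

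The caveat you relegate to the last sentence is, however, the substantive point, and you should state it as such rather than as a side remark: in the borderline case the inequality \eqref{lfc1-1} as written is \emph{false}, not merely unproved. For $a=b=1$ a direct computation (partial fractions on $[0,\mu]$) gives $\int_{\R}\frac{dx}{\langle x\rangle\langle x-\mu\rangle}\sim\frac{\log\langle\mu\rangle}{\langle\mu\rangle}$ as $|\mu|\to\infty$, which beats $\langle\mu\rangle^{-1}$ by a logarithm; more generally the loss occurs exactly when $a=1\ge b$ or $b=1\ge a$ (when $a=1<b$ the $\log|\mu|$ from region $A$ is multiplied by $|\mu|^{-b}$ with $b>1$ and is absorbed, so no correction is needed there). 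Hence your proof, which honestly yields the extra $\log\langle\mu\rangle$ (equivalently, $c-\varepsilon$) in those cases, is the best possible, and the lemma's statement should be amended accordingly. This is not an academic quibble for this paper: Lemma \ref{lemabasic} is invoked precisely with $a=b=1$ in the proof of Lemma \ref{lemaPedr}, so the bound \eqref{Pedrx1} inherits the same logarithmic correction (compare the corresponding Hilbert--Schmidt estimate of Killip--Visan--Zhang \cite{[KVZ]}, which carries an explicit factor of the form $\log\bigl(4+\xi^{2}/k^{2}\bigr)$). Fortunately a weight $\langle\cdot\rangle^{-1}\log\langle\cdot\rangle$ or $\langle\cdot\rangle^{-1+\varepsilon}$ is harmless in the later modulation-space estimates such as \eqref{ggb}, where the relevant $\ell^{q}$ sums still converge, so the repair you suggest does propagate through the application.
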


\begin{lemma}\label{lemaPedr}
For $k\neq 0$ and $u\in S(\R)$,
\begin{equation}\label{Pedrx1}
\|(k-\partial)^{-1/2}u(k+\partial)^{-1/2}\|^2\lesssim \int_{\R}\dfrac{|\widehat{u}(\xi)|^2}{|k|+|\xi|}d\xi\leq C_k\|u\|^2_{H^{-1/2}},
\end{equation}
where $C_k=(\min\{1,|k|\})^{-1/2}$.
\end{lemma}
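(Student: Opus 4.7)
The plan is to reduce the Hilbert--Schmidt norm to a Fourier-side double integral that can be controlled directly by Lemma~\ref{lemabasic}. First I identify $T:=(k-\partial)^{-1/2}u(k+\partial)^{-1/2}$ as an operator of the form $M_1 u M_2$ from Section~\ref{sec-4}, with Fourier multipliers $m_1(\xi)=(k-i\xi)^{-1/2}$ and $m_2(\xi)=(k+i\xi)^{-1/2}$, so that $|m_1(\xi)|^2=|m_2(\xi)|^2=(k^2+\xi^2)^{-1/2}$. Combining the kernel representation \eqref{eqKxav} with the Hilbert--Schmidt identity \eqref{definNormaoper} and applying Plancherel in both variables (which is permissible since the Fourier transform is unitary and preserves the $L^2$ norm of the integral kernel), the kernel transforms into $(2\pi)^{-1/2}m_1(\xi)\widehat u(\xi-\eta)m_2(\eta)$, whence
\begin{equation*}
\|T\|^2 \;=\; \frac{1}{2\pi}\int_{\R}\!\int_{\R}\frac{|\widehat u(\xi-\eta)|^2}{\sqrt{k^2+\xi^2}\,\sqrt{k^2+\eta^2}}\,d\xi\,d\eta.
\end{equation*}

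Next, the change of variables $\zeta=\xi-\eta$ factors $|\widehat u(\zeta)|^2$ out of the double integral and leaves the inner integral
\begin{equation*}
\Phi_k(\zeta)\;=\;\int_{\R}\frac{d\eta}{\sqrt{k^2+(\eta+\zeta)^2}\,\sqrt{k^2+\eta^2}}
\end{equation*}
to be estimated. Using $\sqrt{k^2+x^2}\sim|k|+|x|$, this is comparable to $\int(|k|+|\eta+\zeta|)^{-1}(|k|+|\eta|)^{-1}\,d\eta$; the rescaling $\eta=|k|\tilde\eta$ (treating $|k|\geq 1$ and $|k|<1$ separately if needed) reduces it to the standard shape $|k|^{-1}\int\langle\tilde\eta+\zeta/|k|\rangle^{-1}\langle\tilde\eta\rangle^{-1}\,d\tilde\eta$, to which Lemma~\ref{lemabasic} applies with $a=b=1$ and yields $\Phi_k(\zeta)\lesssim 1/(|k|+|\zeta|)$. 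Substituting this pointwise bound into the expression for $\|T\|^2$ gives the first inequality of \eqref{Pedrx1}.

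For the second inequality I would compare the weights $1/(|k|+|\xi|)$ and $1/\langle\xi\rangle$ by splitting the frequency range: on $|\xi|\leq 1$ one has $1/(|k|+|\xi|)\leq 1/|k|$ while $\langle\xi\rangle^{-1}\sim 1$, which produces a loss of order $(\min\{1,|k|\})^{-1/2}$ against $\|u\|_{H^{-1/2}}^2$, while on $|\xi|>1$ we have $1/(|k|+|\xi|)\leq 1/\langle\xi\rangle$ at no cost; combining the two contributions gives the stated constant $C_k$. I expect the main obstacle to be the borderline application of Lemma~\ref{lemabasic} with $a=b=1$: this is precisely the critical exponent at which a logarithmic factor tends to appear in such convolutions, so one has to verify carefully that after the $|k|$-rescaling the implicit constant is genuinely uniform in $k$ and the claimed bound of the form $1/(|k|+|\zeta|)$ is obtained without an extraneous $\log(|\zeta|/|k|)$. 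The remaining ingredients -- the kernel identity from Section~\ref{sec-4}, Plancherel, and the elementary frequency splitting -- are routine.
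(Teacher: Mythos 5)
Your reduction is exactly the one in the paper's own proof: write $T=(k-\partial)^{-1/2}u(k+\partial)^{-1/2}$ as $M_1uM_2$, pass to the Fourier side through \eqref{eqKxav}, \eqref{definNormaoper} and Plancherel, and reduce to the convolution integral $\Phi_k(\zeta)=\int_\R\big(k^2+(\eta+\zeta)^2\big)^{-1/2}\big(k^2+\eta^2\big)^{-1/2}\,d\eta$. However, the obstacle you flag at the end is not a technicality to be verified but a genuine failure of the argument: after rescaling you must invoke Lemma \ref{lemabasic} with $a=b=1$, which is precisely the critical case where \eqref{lfc1-1} is false. Splitting into $|\eta|\le|\zeta|/2$, $|\eta+\zeta|\le|\zeta|/2$ and the remainder shows $\int_\R\langle\eta+\zeta\rangle^{-1}\langle\eta\rangle^{-1}\,d\eta\sim\log(2+|\zeta|)\,\langle\zeta\rangle^{-1}$, so the pointwise bound $\Phi_k(\zeta)\lesssim(|k|+|\zeta|)^{-1}$ you want does not hold; the sharp behaviour is $\Phi_k(\zeta)\sim\log\!\big(4+\zeta^2/k^2\big)\,(|k|+|\zeta|)^{-1}$. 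Moreover no cleverer estimate can rescue the step, because the first inequality in \eqref{Pedrx1} is itself false as stated: with $k=1$ and $\widehat u=\chi_{[N,N+1]}$ one computes $\|(k-\partial)^{-1/2}u(k+\partial)^{-1/2}\|^2\sim N^{-1}\log N$, while $\int_\R|\widehat u(\xi)|^2(1+|\xi|)^{-1}\,d\xi\sim N^{-1}$.

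What your computation (correctly completed) actually yields is the estimate with a logarithmic weight, $\|(k-\partial)^{-1/2}u(k+\partial)^{-1/2}\|^2\lesssim\int_\R\log\!\big(4+\xi^2/k^2\big)\,(|k|+|\xi|)^{-1}|\widehat u(\xi)|^2\,d\xi$, which is the form this lemma takes in Killip--Visan--Zhang and Oh--Wang; note that the paper's printed proof suffers from the identical defect, since its last step also applies Lemma \ref{lemabasic} with $a=b=1$. The honest fix is to state and prove the lemma with the logarithm and then observe that the loss is harmless where the lemma is used: in \eqref{dda}--\eqref{ggb} and \eqref{mmj} the weight $\langle j-n\rangle^{-1}$ gets replaced by $\langle j-n\rangle^{-1}\log(2+|j-n|)$, which still belongs to $\ell^q_j$ for every $q>1$ (and to $\ell^\infty_j$), so the H\"older and Young steps go through unchanged. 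A secondary point: your frequency splitting for the second inequality (like the statement itself) records the wrong constant; testing the weights at $\xi=0$ with $|k|<1$ shows one needs $C_k=(\min\{1,|k|\})^{-1}$ rather than $(\min\{1,|k|\})^{-1/2}$, although this is immaterial in the sequel, where only $k\ge1$ is used.
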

 
\begin{proof}
From \eqref{eqKxav}, using Plancherel's identity and properties of the Fourier transform we have
\begin{equation}
\begin{split}
\|K(x,z)\|_{L^2_x L^2_z}&=\|m_1^{\vee}*[u(\cdot) (m_2(\cdot)e^{-iz(\cdot)})^{\vee}](x)\|_{L^2_x L^2_z}\\
&=\|m_1(\xi) \|[u(\cdot) (m_2(\cdot)e^{-iz(\cdot)})^{\vee}]^{\wedge}(\xi)\|_{L^2_z} \|_{ L^2_\xi}\\
&=\|m_1(\xi) \|[\widehat{u}* (m_2(\cdot)e^{-iz(\cdot)})(\xi)\|_{L^2_z} \|_{ L^2_\xi}\\
&=\|m_1(\xi) \|[(u(\cdot) e^{-i\xi(\cdot)})^{\vee} m_2(\cdot)]^{\wedge}(z)\|_{L^2_z} \|_{ L^2_\xi}\\
&=\|m_1(\xi) \|u^{\vee} (\xi-\eta) m_2(\eta)\|_{L^2_\eta} \|_{ L^2_x}.
\end{split}
\end{equation}
Thus making a change of variables $\xi =k \xi$ and $\eta=k\eta$ followed by another change of variables $\xi-\eta=y$,
\begin{equation}
\begin{split}
\|K(x,z)\|_{L^2_x L^2_z}^2&=\int_{\R^2}\dfrac{ |\widehat{u} (\xi-\eta)|^2}{\sqrt{k^2+\xi^2} \sqrt{k^2+\eta^2}} d\xi d\eta\\&\sim\int_{\R^2}\dfrac{ |\widehat{u} (k(\xi-\eta))|^2}{\langle \xi \rangle \langle \eta \rangle} d\xi d\eta\\
&=\int_{\R^2}\dfrac{ |\widehat{u} (k y)|^2}{\langle y+\eta \rangle \langle \eta \rangle} dy d\eta\\
&\lesssim\int_{\R}\dfrac{ |\widehat{u} (k y)|^2}{\langle y \rangle } dy,
\end{split}
\end{equation}
where in the last inequality was used Fubinni theorem and Lemma \ref{lemabasic} with $a=b=1$.
\end{proof}

If $T$ is a linear operator associated with the kernel $K$, we define the operator $\overline{T}$ as
\begin{equation}
\overline{T}f(x):=\int_{\R} \overline{K(x,y)} f(y) dy,
\end{equation}
where $\overline{K}$ denotes the complex conjugate of $K$. Thus if $K(x,y) \in \R$, then $\overline{T}=T$. 
Observe that $\overline{m^{\vee}(\eta)}=\widehat{\overline{m}}(\eta)=\overline{m}^{\vee}(-\eta)$, thus
\begin{equation}\label{conjM}
\overline{Mu}=M^{-} \overline{u},
\end{equation}
where $M^{-}$ is the multiplier operator, associated to $m^{-}(\xi)=\overline{m}(-\xi)$. In this way we get, for any $k$ real number
\begin{equation}
\overline{(k-\partial)^{-1}u }=(k-\partial)^{-1}\overline{u}, \quad \textrm{and}\quad \overline{(k+\partial)^{-1}u }=(k+\partial)^{-1}\overline{u},
\end{equation}
since that if $m(\xi)=(k\pm i\xi)^{-1}$, then $\overline{m}(-\xi)=m(\xi)$.

Let $M$ a multiplier operator associated to $m$,  by \eqref{kerAdj} 
$$K_{M^*}(x,y)=\overline{K_{M}(y,x)}=\overline{m^{\vee}}(y-x)=\overline{\widehat{m}}(x-y)=\overline{m}^{\vee}(x-y)=K_{\overline{M}}(x,y), $$ i.e. 
\begin{equation}\label{adj1}
M^{*}=\overline{M}.
\end{equation}
 Similarly using \eqref{kerAdj} and the above example, we have
\begin{equation}\label{adj2}
(Mu)^{*}=\overline{u} \overline{M}. 
\end{equation}
The equalities \eqref{adj1} and \eqref{adj2} imply that
\begin{equation}\label{adj3}
(M_1uM_2)^{*}=((M_1u)M_2)^{*}=M_2^{*}(M_1u)^{*}=\overline{M_2}\overline{u} \overline{M_1}.
\end{equation}

\subsection{Derivatives of the Multiplication Operator}
Let $u\in S(\R)$ we define $Pu: S(\R) \to S(\R)$ the operator of multiplication associated to $u$ as:
$$
(Pu)f=uf, \quad f\in S(\R).
$$
We will use the notation $Pu:=u$.  Let $n,l \in \Z^+$, we also define the operators of multiplication $u \partial^{n}: S(\R) \to S(\R)$ by $(u \partial^n)(f)=u f^{(n)}$ and $(\partial^l u): S(\R) \to S(\R)$ by $(\partial^l u)(f):=(uf)^{(l)}$.
Considering $n=l=1$, we obtain $(\partial u)(f)=(uf)'=u'f+uf'$ for all $f\in S(\R)$  or equivalently $\partial u =u' +u \partial$ and thus $u'=\partial u-u \partial=[\partial, u]$.
We can have combinations between both operators such as 
\begin{equation}\label{eq2sc1}
\partial(u\partial)(f)=((u\partial)f)'=(uf')'=u'f'+uf'' \quad \iff\quad \partial(u\partial)=u'\partial+u\partial^2.
\end{equation}

Similarly we can define other operators of multiplication, such as
\begin{equation}
(\partial^2 u)(f):=(uf)''=u''f +2u'f'+uf''
\end{equation}
which gives
\begin{equation}
\partial^2 u=u''+2u'\partial +u \partial^2 \quad \iff\quad u''=\partial^2 u-2u'\partial -u \partial^2,
\end{equation}
We define $(k\pm\partial)u:=ku\pm\partial u$, $(k\pm\partial)^2u:=k^2u\pm2k\partial u+\partial^2 u$ and by induction $(k\pm\partial)^nu:=(k\pm\partial)^{n-1}(k\pm\partial)u$, $n=3,4,\dots$.

Using  \eqref{eq2sc1}, adding and subtracting terms in the above equality, we have the following identity
\begin{equation}\label{eq2sc2}
\begin{split}
u''=&u(\partial^2-2k\partial-k^2)+(\partial^2+ 2k\partial-k^2)u+2(k-\partial)u(k+\partial)\\
=&u(k-\partial)^2+(k+\partial)^2u-4k^2u+ 2(k-\partial)u(k+\partial).
\end{split}
\end{equation}
valid for all $k\in \R$.  Also we have  the operator of multiplication
\begin{equation}
(\partial^3 u)(f):=(uf)'''=u'''f +3u''f'+3u'f''+uf''''\quad 
\end{equation}
therefore
\begin{equation}
\partial^3 u=u''' +3u'' \partial +3u'\partial^2+u\partial^3\,\,
\iff\,\,
u'''=\partial^3 u-3u'' \partial-3u'\partial^2-u\partial^3
\end{equation}
we also have the following identity
$$
u'''=\partial^3 u-3\partial^2 u \partial+3\partial u \partial^2-u \partial^3=\sum_{j=0}^3(-1)^j\binom{3}{j}\partial^{3-j} u \partial^j.
$$
 Using  $u'=\partial u-u \partial$, adding and subtracting terms in the above equality, we obtain
\begin{equation}\label{u3}
u'''=u(k-\partial)^3+(k+\partial)^3u-8k^3u+(k-\partial)(3u'+6k u)(k+\partial)
\end{equation}
for any $k\in \R$. On the other hand
$
\partial(|u|^2u)(f)=(|u|^2u f)'=2|u|^2u'f+u^2\overline{u}'f+|u|^2u f'
$
thus
\begin{equation}\label{nonl1}
\partial(|u|^2u)=2|u|^2u'+u^2\overline{u}'+|u|^2u \partial,
\end{equation}
adding and subtracting terms in the above equality, we obtain
\begin{equation}\label{nonl2}
2|u|^2u'=-(|u|^2u) (k+\partial)- (k-\partial)(|u|^2u)-u^2(\overline{u}'-2k\overline{u}),
\end{equation}
\\
\subsection{Trace of Products of Multiplier Operators}
Next, we will state some properties of the Products of Multiplier Operators.
\begin{proposition}
Let $M_j$ the multipliers operator  associated to $m_j$, $u_j \in S(\R)$, $j=1,\dots,n+1$ and $\sigma$ a shift permutation of the n-upla $(1,2, \dots, n)$,  we have
\begin{equation}\label{Prop1Tn}
\text{tr}\left( \Pi_{j=1}^n  M_j u_j\right)=\text{tr}\left( \Pi_{j=1}^n  M_{\sigma(j)} u_{\sigma(j)}\right)=\text{tr}\left( \left(\Pi_{j=1}^{n-1} u_{\sigma(j)} M_{\sigma(j+1)}\right) u_{\sigma(n)} M_{\sigma(1)} \right)
\end{equation}
\begin{equation}\label{Prop2Tn}
\text{tr}\left( \left(\Pi_{j=1}^{n}  M_ju_j\right) M_{n+1} \right)=\text{tr}\left( M_1M_{n+1} u_1\left(\Pi_{j=2}^{n} M_{j}  u_{j} \right)\right)
\end{equation}
\begin{equation}\label{Prop3Tn}
\text{tr}\left( \Pi_{j=1}^n u_j M_j \right)=\text{tr}\left( \left( \Pi_{j=1}^{n-1} M_j u_{j+1} \right)M_n u_1\right)
\end{equation}
\begin{equation}\label{Prop4Tn}
\text{tr}\left( \left( \Pi_{j=1}^n u_j M_j\right) u_{n+1} \right)=\text{tr}\left( \left( \Pi_{j=1}^{n-1} M_j u_{j+1}\right) M_n (u_1 u_{n+1})\right)
\end{equation}
\end{proposition}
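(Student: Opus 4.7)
The plan is to reduce all four identities to two elementary ingredients: the cyclic invariance of the trace from \eqref{ConmTr}, iterated to $n$-fold products, and the commutativity of multiplier operators among themselves as encoded in \eqref{ConmOper} (together with the trivial fact that pointwise multiplication by functions is commutative and associative). Throughout, I will freely use associativity of operator composition to regroup adjacent factors $Mu$ or $uM$ without changing the product, and the Hilbert--Schmidt bound \eqref{prodtraceTn} to guarantee that every intermediate trace is well defined for $u_j \in \mathcal{S}(\R)$ and nice multipliers $m_j$.

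First I would record the $n$-fold cyclic identity: setting $A_j := M_j u_j$, an induction on $n$ from $\text{tr}(AB) = \text{tr}(BA)$ gives
\begin{equation*}
\text{tr}(A_1 A_2 \cdots A_n) = \text{tr}(A_{k} A_{k+1} \cdots A_n A_1 \cdots A_{k-1})
\end{equation*}
for every $1\leq k\leq n$, i.e.\ invariance under any cyclic shift $\sigma$. This yields the first equality of \eqref{Prop1Tn}. For the second equality, associativity lets me rewrite
\begin{equation*}
M_{\sigma(1)} u_{\sigma(1)} M_{\sigma(2)} u_{\sigma(2)} \cdots M_{\sigma(n)} u_{\sigma(n)} = M_{\sigma(1)}\bigl(u_{\sigma(1)} M_{\sigma(2)}\bigr)\cdots\bigl(u_{\sigma(n-1)} M_{\sigma(n)}\bigr) u_{\sigma(n)},
\end{equation*}
and one further cyclic rotation pushes $M_{\sigma(1)}$ to the end, delivering exactly the stated right-hand side.

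For \eqref{Prop2Tn}, I cyclically rotate $M_{n+1}$ to the front so that it meets $M_1$, and then invoke $M_{n+1} M_1 = M_1 M_{n+1}$ from \eqref{ConmOper} to land on $\text{tr}\bigl(M_1 M_{n+1} u_1 M_2 u_2\cdots M_n u_n\bigr)$. For \eqref{Prop3Tn}, I cyclically rotate $u_1$ to the end of $u_1 M_1 u_2 M_2 \cdots u_n M_n$ and regroup by associativity into $(M_1 u_2)(M_2 u_3)\cdots(M_{n-1} u_n)\, M_n u_1$. For \eqref{Prop4Tn}, the same cyclic rotation of $u_1$ past the trailing $u_{n+1}$ leaves two adjacent multiplication-by-function factors $u_{n+1}$ and $u_1$ which, because such operators just multiply pointwise, collapse into the single multiplication operator by the function $u_1 u_{n+1}$; grouping the remainder as in \eqref{Prop3Tn} produces the stated expression.

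The substantive step here is really only the extension of \eqref{ConmTr} to $n$-fold products, and the verification that each rearrangement preserves the Hilbert--Schmidt character of the composed operator so that its trace is well defined; once these are in place, the proof is essentially a bookkeeping exercise combining cyclic permutation with the commutativity of multiplier operators and the associativity of composition. I do not anticipate any genuine obstacle beyond carrying out the rearrangements carefully and keeping track of indices.
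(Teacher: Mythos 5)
Your argument is correct, and it reaches all four identities, but it is organized differently from the paper's proof. The paper proves these identities at the kernel level: it expands each trace via \eqref{traceTn} as an explicit iterated integral of factors $m_j^{\vee}(\xi_{j-1}-\xi_j)u_j(\xi_j)$ and obtains \eqref{Prop1Tn} by Fubini (relabelling the integration variables), while for \eqref{Prop2Tn} it integrates out the variable shared by $m_1^{\vee}$ and $m_{n+1}^{\vee}$ so that they merge into $(m_1m_{n+1})^{\vee}$, i.e.\ into the kernel of $M_1M_{n+1}$; the proofs of \eqref{Prop3Tn} and \eqref{Prop4Tn} are declared similar. You instead stay at the operator level: you iterate the two-factor cyclicity \eqref{ConmTr} to $n$-fold products, regroup by associativity, invoke \eqref{ConmOper} to commute $M_{n+1}$ past $M_1$, and collapse the adjacent multiplication operators $u_{n+1}$ and $u_1$ into multiplication by $u_1u_{n+1}$. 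The two routes are essentially equivalent in content (the cyclicity lemma is itself a Fubini statement), and yours buys cleaner bookkeeping and a uniform treatment of all four identities; the paper's buys an explicit display of where the integrability/Fubini hypotheses enter. One small caveat for your version: when you rotate a bare multiplication operator $u_1$ (in \eqref{Prop3Tn}, \eqref{Prop4Tn}) or a bare multiplier $M_{n+1}$ (in \eqref{Prop2Tn}) around the trace, you are applying cyclicity to a factor that is not Hilbert--Schmidt (its kernel is $u_1(x)\delta(x-y)$, resp.\ $m_{n+1}^{\vee}(x-y)$, which is not in $L^2(\R^2)$), so \eqref{ConmTr} as established does not literally cover it; it is, however, immediate from the diagonal-kernel definition of the trace, since $\mathrm{tr}(u_1B)=\int u_1(x)K_B(x,x)\,dx=\mathrm{tr}(Bu_1)$, and a one-line remark to this effect (or grouping so that every rotated block has an honest $L^2$ kernel, as the paper's integral computation implicitly does) would close that gap.
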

\begin{proof}
Using \eqref{traceTn} and example before, we have
\begin{equation}
\text{tr}\left( \Pi_{j=1}^n  M_j u_j\right)=\int_{\R^n}m_1^{\vee}
(x-\xi_1) u_1(\xi_1)m_2^{\vee}
(\xi_1-\xi_2) u_2(\xi_2)\cdots m_n^{\vee}
(\xi_{n-1}-x) u_n(x)dxd\xi_1 \cdots d \xi_{n-1}
\end{equation}
applying Fubinni we get \eqref{Prop1Tn}.
Similarly
\begin{equation*}
\begin{split}
&\text{tr}\left( \left(\Pi_{j=1}^{n}  M_ju_j\right) M_{n+1} \right)= \\
&\int_{\R^{n+1}}m_1^{\vee}
(x-\xi_1) u_1(\xi_1)m_2^{\vee}
(\xi_1-\xi_2) u_2(\xi_2)\cdots m_{n}^{\vee}
(\xi_{n-1}-\xi_{n}) u_{n}(\xi_{n})  m_{n+1}^{\vee}
(\xi_{n}-x)\\
=&\int_{\R^{n}}u_1(\xi_1)m_2^{\vee}
(\xi_1-\xi_2) u_2(\xi_2)\cdots m_{n}^{\vee}
(\xi_{n-1}-\xi_{n}) u_{n}(\xi_{n})  \int_{\R}m_1^{\vee}
(x-\xi_1) m_{n+1}^{\vee}
(\xi_{n}-x) dx\\
=&\int_{\R^{n}}u_1(\xi_1)m_2^{\vee}
(\xi_1-\xi_2) u_2(\xi_2)\cdots m_{n}^{\vee}
(\xi_{n-1}-\xi_{n}) u_{n}(\xi_{n})  (m_1m_{n+1})^{\vee}
(\xi_{n}-\xi_1) dx
\end{split}
\end{equation*}
and \eqref{Prop2Tn} follows. The proof of inequalities \eqref{Prop3Tn} and \eqref{Prop4Tn} are similar.
\end{proof}

Note that on the right side of \eqref{Prop2Tn}, \eqref{Prop3Tn} and \eqref{Prop4Tn} we can  still use the \eqref{Prop1Tn} property.  By property \eqref{Prop3Tn} we have
\begin{equation}\label{trace2}
\begin{split}
\text{tr}\left(M_1 u_1 M_2u_2\right)=&\text{tr}\left(u_2 M_1 u_1M_2\right)=\int_{\R^{2}}m_1^{\vee}
(x-\xi_1) u_1(\xi_1)m_2^{\vee}
(\xi_1-x) u_2(x) d\xi_1 dx\\
=&\int_{\R}u_1(\xi_1) \left\{ \left( \widehat{m_1}m_2^{\vee} \right)*u_2 \right\} (\xi_1) d\xi_1.
\end{split}
\end{equation}
\begin{lemma}\label{lemaPedr2}
For $k\neq0$ and $u\in S(\R)$,
\begin{equation}\label{Pedrx2}
\text{Re\,tr}\left\{(k-\partial)^{-1}u(k+\partial)^{-1}\overline{u}\right\}= 2kc\int_{\R}\dfrac{|\widehat{u}(\xi)|^2}{4 k^2+\xi^2}d\xi\sim_k\|u\|^2_{H^{-1}}.
\end{equation}
\end{lemma}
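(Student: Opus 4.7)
The plan is to evaluate the trace in closed form using the kernel formula \eqref{trace2} applied to $M_1=(k-\partial)^{-1}$, $u_1=u$, $M_2=(k+\partial)^{-1}$, $u_2=\overline{u}$, whose Fourier multipliers are $m_1(\xi)=(k-i\xi)^{-1}$ and $m_2(\xi)=(k+i\xi)^{-1}$.

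The first step is to compute $m_1^{\vee}$ and $m_2^{\vee}$ explicitly by contour integration. For $k>0$ one obtains
\begin{equation*}
m_1^{\vee}(x)=\sqrt{2\pi}\,e^{kx}\,\chi_{(-\infty,0)}(x),\qquad m_2^{\vee}(x)=\sqrt{2\pi}\,e^{-kx}\,\chi_{(0,\infty)}(x),
\end{equation*}
with the supports and overall signs reversed for $k<0$. In either case the product $m_1^{\vee}(x-y_1)\,m_2^{\vee}(y_1-x)$ is a nonnegative exponential kernel equal to $2\pi e^{-2|k|\,|y_1-x|}$, supported in the half-plane $\{\sgn(k)(y_1-x)>0\}$.

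Substituting this into \eqref{trace2}, changing variables $w=|y_1-x|$, and applying Plancherel to the autocorrelation
\begin{equation*}
\int_{\R}\overline{u}(x)\,u(x+\sgn(k)\,w)\,dx=\int_{\R}|\widehat{u}(\xi)|^{2}\,e^{i\,\sgn(k)\,w\,\xi}\,d\xi,
\end{equation*}
one swaps the order of integration and evaluates the one-dimensional Laplace-type integral $\int_{0}^{\infty}e^{-(2|k|-i\,\sgn(k)\,\xi)\,w}\,dw=(2|k|-i\,\sgn(k)\,\xi)^{-1}$ to obtain
\begin{equation*}
\text{tr}\bigl((k-\partial)^{-1}u(k+\partial)^{-1}\overline{u}\bigr)=2\pi\int_{\R}\frac{|\widehat{u}(\xi)|^{2}}{2|k|-i\,\sgn(k)\,\xi}\,d\xi.
\end{equation*}
Taking real parts cancels the odd-in-$\xi$ imaginary part and leaves exactly the claimed identity, with the universal constant $c$ absorbing the prefactor $2\pi$.

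The equivalence with $\|u\|_{H^{-1}}^{2}$ then follows from the pointwise comparison $\tfrac{2|k|}{4k^{2}+\xi^{2}}\sim_{k}\tfrac{1}{1+\xi^{2}}$, uniform in $\xi$ with constants depending only on $k$ (namely $C_{k}=\max\{|k|,|k|^{-1}\}$ up to absolute constants). The main bookkeeping obstacle is tracking the sign of $k$ through the half-line supports of $m_{1}^{\vee}$ and $m_{2}^{\vee}$; however, the two sign flips occurring when $k<0$ cancel in the product, so the final integral representation depends only on $|k|$ and $k^{2}$, yielding a formula valid for all $k\neq 0$ without case analysis in the statement.
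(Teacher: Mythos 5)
Your proof is correct and takes essentially the same route as the paper's: both reduce the trace via the kernel formula \eqref{trace2} to the one-sided exponential kernels of $(k\mp\partial)^{-1}$, use Plancherel, and conclude from $\mathrm{Re}\,(2k-i\xi)^{-1}=2k/(4k^2+\xi^2)$, the only difference being that you compute the kernels explicitly in physical space where the paper compresses the step as the Fourier transform of $|\widehat{m_1}|^2$. Your version with $2|k|$ agrees with the stated $2kc$ in the relevant case $k>0$ used throughout the paper.
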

\begin{proof}
Using \eqref{ConmTr}, \eqref{trace2} with $u_1=\overline{u}$, $u_2=u$ and $m_1(\xi)=\dfrac{1}{k+i\xi}$, $m_2(\xi)=\dfrac{1}{k-i\xi}=\overline{m_1}$ and Plancherel identity, we have $m_2^{\vee}=\overline{m_1}^{\vee}=\overline{\widehat{m_1}}$
\begin{equation}\label{eq2Pedrx2}
\begin{split}
\text{Re\,tr}\left\{(k-\partial)^{-1}u(k+\partial)^{-1}\overline{u}\right\}=&\text{Re\,tr}\left\{(k+\partial)^{-1}\overline{u}(k-\partial)^{-1}u\right\}\\
=& \text{Re\,}\int_{\R}\overline{\widehat{u}}(\xi)\widehat{u}(\xi) \widehat{\left( \widehat{m_1}m_2^{\vee} \right)}(\xi) d\xi\\
=&\text{Re\,}\int_{\R}|\widehat{u}(\xi)|^2 \widehat{|\widehat{m_1}|^2}(\xi) d\xi\\
= &c\text{Re\,}\int_{\R}|\widehat{u}(\xi)|^2 \dfrac{1}{2k-i\xi} d\xi.
\end{split}
\end{equation}
\end{proof}

From the definition of norm  of operator $M_1uM_2$, (see definition  \eqref{definNormaoper}), \eqref{adj3} and property \eqref{Prop2Tn} holds
\begin{equation}
\|M_1uM_2\|^2=\text{tr}(M_1uM_2\overline{M_2}\overline{u} \overline{M_1})=\text{tr}(\overline{M_1}M_1uM_2\overline{M_2}\overline{u} ),
\end{equation}
similarly
\begin{equation}
\|M_2\overline{u}M_1\|^2=\text{tr}(M_2\overline{u}M_1\overline{M_1}u \overline{M_2})=\text{tr}(\overline{M_2}M_2\overline{u} M_1\overline{M_1}u)
\end{equation}
the above equalities,  \eqref{ConmTr} and \eqref{ConmOper} imply that 
\begin{equation}\label{ignorm}
\|M_1uM_2\|=\|M_2\overline{u}M_1\|.
\end{equation}

By analogy with has gone in the case of the NLS and mKdV, we define
\begin{equation}\label{definalpha}
\alpha(u(t),k):=\text{Re} \sum_{l=1}^{\infty} \dfrac{(-1)^{l-1}}{l} \text{tr} \left\{ [(k-\partial)^{-1/2}u(k+\partial)^{-1}\overline{u} (k-\partial)^{-1/2}]^l\right\}
\end{equation}
Let $R^{\pm}=(k\pm\partial)^{-1}$. Using \eqref{Prop2Tn} we deduce
\begin{equation}\label{eq1}
\text{tr} \left\{ [(k-\partial)^{-1/2}u(k+\partial)^{-1}\overline{u} (k-\partial)^{-1/2}]^l\right\}=\text{tr} \left\{   \left(R^{-}uR^{+}\overline{u}\right)^l\right\}.
\end{equation}
We consider $u$ a solution of h-NSE, i.e.
\begin{equation}\label{modeloHONSE}
\partial_{t}u=-L(u)+F(u),
 \quad x,t \in \R, 
\end{equation}
where $L(u)=ia \partial^{2}_{x}u+b\partial^{3}_{x}u$ and $F(u)=2ia|u|^{2}u+6b
|u|^{2}\partial_{x}u$.
\begin{theorem}\label{alphaHonse}[Conservation of $\alpha$ for h-NSE]
Let $u(x,t)$ denote a Schwartz-space solution to HONSE. Then for any $k> 0$ holds,
\begin{equation}\label{eqalpha}
\dfrac{d}{dt}\alpha(u(t),k)=0.
\end{equation}
\end{theorem}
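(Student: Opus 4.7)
The plan is to differentiate \eqref{definalpha} term by term in $l$, substitute the equation \eqref{modeloHONSE}, and show that every trace cancels after invoking the algebraic identities of Subsection~4.1 together with the cyclic invariance \eqref{ConmTr} of the trace. Since $u(\cdot,t)\in\s(\R)$, the estimate \eqref{prodtraceTn} combined with \eqref{Pedrx1} makes $|\text{tr}(A^l)|$ decay geometrically in $l$ for $|k|$ large enough, so differentiation under the sum is legitimate. Throughout I write $R^{\pm}:=(k\pm\partial)^{-1}$ and $A:=R^-uR^+\overline{u}$, so that the $l$-th summand in $\alpha$ is $\tfrac{(-1)^{l-1}}{l}\text{tr}(A^l)$.

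The first step is to use the Leibniz rule together with \eqref{ConmTr} to get
\begin{equation*}
\tfrac{d}{dt}\text{tr}(A^l)=l\cdot\text{tr}\bigl\{A^{l-1}\bigl(R^-u_tR^+\overline{u}+R^-uR^+\overline{u}_t\bigr)\bigr\},
\end{equation*}
so that the factor $l$ cancels the $1/l$ in \eqref{definalpha} and the identity to prove reduces to
\begin{equation*}
\text{Re}\sum_{l\ge 1}(-1)^{l-1}\text{tr}\bigl\{A^{l-1}\bigl(R^-u_tR^+\overline{u}+R^-uR^+\overline{u}_t\bigr)\bigr\}=0.
\end{equation*}
Substituting $u_t=a(-iu_{xx}+2i|u|^2u)+b(-u_{xxx}+6|u|^2u_x)$ from \eqref{modeloHONSE} splits the sum into an NLS-type piece (coefficient $a$) and a complex mKdV-type piece (coefficient $b$); by linearity it suffices to handle each separately.

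For the $a$-piece, apply \eqref{eq2sc2} to rewrite $u''$. Since $R^\pm$ commute with every polynomial in $\partial$, we have $R^-u(k-\partial)^2R^+=R^-uR^+(k-\partial)^2$ and $R^-(k+\partial)^2uR^+=(k+\partial)^2R^-uR^+$, while the cross term in \eqref{eq2sc2} collapses to the bare multiplication $2u$ since $R^-(k-\partial)=(k+\partial)R^+=I$. Inserting the result into $\text{tr}\{A^{l-1}\cdot\}$ and rotating cyclically, the outer $(k\pm\partial)^2$ factors recombine with the adjacent $R^\pm$ inside the next copy of $A$, yielding a telescoping identity in $l$. The residual $-4k^2\text{tr}(A^l)+2\text{tr}\{A^{l-1}|u|^2\}$ terms are balanced by the nonlinearity $2ia|u|^2u$ via \eqref{nonl2} (with $k=0$). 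After a symmetric manipulation on the $\overline{u}_t$ side using \eqref{adj3} and \eqref{ignorm}, the total $a$-contribution is purely imaginary and hence killed by $\text{Re}$; this is the Killip--Visan--Zhang calculation for the cubic NLS.

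For the $b$-piece, apply identity \eqref{u3} to $u'''$: the outer terms $u(k-\partial)^3$ and $(k+\partial)^3u$ again commute past $R^\pm$ and telescope across $l$, while the cross term collapses to the multiplication operator $3u'+6ku$. The nonlinearity $6b|u|^2u'$ is rewritten via \eqref{nonl2} as $-3b(|u|^2u)(k+\partial)-3b(k-\partial)(|u|^2u)-3bu^2(\overline{u}'-2k\overline{u})$, and the coefficient $6$ appearing in $c_2=-6b$ is exactly what is needed for $3u'+6ku$ to match the $(k\pm\partial)$-factored nonlinearity pointwise. Symmetric manipulations on $\overline{u}_t$, combined with \eqref{adj3} and \eqref{ignorm}, yield pairs of complex conjugates whose real parts cancel. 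The main technical obstacle is precisely this last alignment: one must carefully track the alternating signs $(-1)^{l-1}$, the cyclic rotations \eqref{Prop1Tn}--\eqref{Prop4Tn}, and the pairing of the three pieces of \eqref{nonl2} with the single cross term from \eqref{u3}, so that the telescoping across consecutive $l$'s leaves no residue. This is the complex mKdV cancellation carried out by Oh and Wang, and the particular choice $c_1=-2a$, $c_2=-6b$, $d=0$ in \eqref{honse} is the one for which the NLS and mKdV cancellations are simultaneously compatible, which is why the conservation law \eqref{eqalpha} is stated for \eqref{xhonse}.
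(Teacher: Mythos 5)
Your overall strategy (termwise differentiation of \eqref{definalpha}, substitution of \eqref{modeloHONSE}, the multiplication-operator identities of Subsection~4.1, and cyclicity of the trace) is the same as the paper's, but the bookkeeping of \emph{which} terms cancel \emph{where} is wrong, and the decisive identity is asserted rather than proved. In the paper the highest-order operator terms do not ``telescope across consecutive $l$'s'': writing $u_{xx}$, $u_{xxx}$ via \eqref{eq2sc2}, \eqref{u3} and $\overline{u_{xx}}$, $\overline{u_{xxx}}$ via the conjugated identities \eqref{eq2sc2conj}, \eqref{u3-1}, the contributions of type $R^{-}u(k-\partial)^{2}R^{+}\overline{u}$, $R^{-}u(k-\partial)^{3}R^{+}\overline{u}$ and the constants $\mp4k^{2}A$, $\mp8k^{3}A$ (in your notation $A=R^{-}uR^{+}\overline{u}$) cancel between $R^{-}(Lu)R^{+}\overline{u}$ and $R^{-}uR^{+}\overline{Lu}$ at the \emph{same} level $l$ (for the Schr\"odinger part this uses the sign flip $\overline{iu_{xx}}=-i\,\overline{u_{xx}}$), the surviving operator terms $R^{-}\mathcal{B}uR^{+}\overline{u}$ and $-R^{-}uR^{+}\overline{u}\mathcal{B}$ cancel against each other inside the same trace by \eqref{Prop1Tn}--\eqref{Prop2Tn} (see \eqref{eqB1}--\eqref{eqB2}), and the \emph{only} cross-level pairing is between the leftover quadratic multiplication terms $F_1(k)$ and $R^{-}\overline{F_1(-k)}(k-\partial)$ of \eqref{xder2u3-1} at level $l$ and the cubic nonlinearity at level $l-1$, which is exactly \eqref{Fnlin1}. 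In particular your claim that a residual $-4k^{2}\,\text{tr}(A^{l})$ is ``balanced by the nonlinearity $2ia|u|^{2}u$ via \eqref{nonl2} with $k=0$'' cannot be correct: that term is homogeneous of degree $2l$ in $(u,\overline{u})$ while the nonlinear contribution carrying the weight $A^{l-1}$ has degree $2l+2$, so no cancellation between them is possible; in fact the $-4k^{2}$ terms disappear already in the sum of the two linear contributions, \eqref{nonl2} is used with the \emph{same} $k$ (together with its conjugate \eqref{nonl2conj}) and only for the derivative nonlinearity $|u|^{2}u_x$, while $2ia|u|^{2}u$ is matched directly by the $2ai|u|^{2}$ part of $F_1(k)$.

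The genuine gap is that the heart of the argument --- the identity \eqref{Fnlin1}, i.e.\ the exact matching of the residual terms of \eqref{xder2u3-1} against the nonlinear terms one level down, carried out in the paper through \eqref{Fnlin3}, \eqref{xNonlx1}--\eqref{Nonlx3} --- is not carried out in your write-up: you state that ``the telescoping across consecutive $l$'s leaves no residue'' and defer to Killip--Visan--Zhang and Oh--Wang. Splitting into the $a$- and $b$-pieces by linearity in $u_t$ is legitimate, but those references treat NLS and complex mKdV separately and do not by themselves prove the statement for \eqref{xhonse}; moreover the mechanism you describe for the cancellation is not the correct one, and your remark that ``the total $a$-contribution is purely imaginary'' is neither needed nor consistent with the exact cancellation that actually occurs. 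A minor further point: you justify differentiation under the sum only for $|k|$ large, whereas the theorem is stated for every $k>0$ (the paper is admittedly informal here too, the series being summable only when $\|u\|$ is small relative to $k$).
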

\begin{proof}
Using \eqref{eq1} and differentiating
\begin{equation}
\begin{split}
\dfrac{d}{dt}\alpha(u(t),k)&=\text{Re} \sum_{l=1}^{\infty} (-1)^{l-1}\text{tr} \left\{ \left(R^{-}uR^{+}\overline{u}\right)^{l-1}\left(  R^{-}(\dfrac{d}{dt}u)R^{+}\overline{u} +R^{-}uR^{+}\dfrac{d}{dt}\overline{u}\right)\right\}\\
&=\text{Re} \sum_{l=1}^{\infty} (-1)^{l-1}\text{tr} \left\{ \left(R^{-}uR^{+}\overline{u}\right)^{l-1}\left(  R^{-}(-Lu)R^{+}\overline{u} +R^{-}uR^{+}(-\overline{Lu})\right)\right\}\\
&+\text{Re} \sum_{l=1}^{\infty} (-1)^{l-1}\text{tr} \left\{ \left(R^{-}uR^{+}\overline{u}\right)^{l-1}\left(  R^{-}(F(u))R^{+}\overline{u} +R^{-}uR^{+}(\overline{F(u)}\right)\right\}\\
&=\text{Re}\,\text{tr} \left\{ \left(  R^{-}(-Lu)R^{+}\overline{u} +R^{-}uR^{+}(-\overline{Lu})\right)\right\}\qquad (l=1)\\
&+\text{Re} \sum_{l=1}^{\infty} (-1)^{l}\text{tr} \left\{ \left(R^{-}uR^{+}\overline{u}\right)^{l}\left(  R^{-}(-Lu)R^{+}\overline{u} +R^{-}uR^{+}(-\overline{Lu})\right)\right\}\quad (l:=l+1)\\
&+\text{Re} \sum_{l=1}^{\infty} (-1)^{l-1}\text{tr} \left\{ \left(R^{-}uR^{+}\overline{u}\right)^{l-1}\left(  R^{-}(F(u))R^{+}\overline{u} +R^{-}uR^{+}(\overline{F(u)}\right)\right\}.
\end{split}
\end{equation}
Thus \eqref{eqalpha} follows if 
\begin{equation}\label{Lin1}
\text{Re}\,\text{tr} \left\{ \left(  R^{-}(Lu)R^{+}\overline{u} +R^{-}uR^{+}(\overline{Lu})\right)\right\} =0,
\end{equation}
and
\begin{equation}\label{Fnlin1}
\begin{split}
\text{Re}\,\text{tr} \left\{ \left(R^{-}uR^{+}\overline{u}\right)^{l}\left(  R^{-}(Lu)R^{+}\overline{u} +R^{-}uR^{+}(\overline{Lu})\right)\right\}=&\\
-\text{Re}\, \text{tr} \left\{ \left(R^{-}uR^{+}\overline{u}\right)^{l-1}\left(  R^{-}(F(u))R^{+}\overline{u} +R^{-}uR^{+}(\overline{F(u)}\right)\right\}.
\end{split}
\end{equation}
Using \eqref{trace2} with $M_1=R^{-}$ and $M_2=R^{+}$, integrating three times we get
\begin{equation}\label{x1}
\text{tr} \left\{ \left(  R^{-} (u_{xxx})R^{+}\overline{u} \right)\right\}=\int_{\R}u_{xxx}(\xi_1) \left( \widehat{m_1}m_2^{\vee} \right)*\overline{u}  (\xi_1) d\xi_1=-\int_{\R}u(\xi_1) \left( \widehat{m_1}m_2^{\vee} \right)*\overline{u_{xxx}}  (\xi_1) d\xi_1,
\end{equation}
similarly
\begin{equation}\label{x2}
\text{tr} \left\{ \left(  R^{-} (iu_{xx})R^{+}\overline{u} \right)\right\}=\int_{\R}iu_{xx}(\xi_1) \left( \widehat{m_1}m_2^{\vee} \right)*\overline{u}  (\xi_1) d\xi_1=-\int_{\R}u(\xi_1) \left( \widehat{m_1}m_2^{\vee} \right)*\overline{iu_{xx}}  (\xi_1) d\xi_1,
\end{equation}
using \eqref{x1} and \eqref{x2} we obtain \eqref{Lin1}. In order to prove \eqref{Fnlin1},
considering k:=-k and taking complex conjugate in \eqref{eq2sc2}, \eqref{u3} we obtain
\begin{equation}\label{eq2sc2conj}
\overline{u_{xx}}=(k-\partial)^2\overline{u}+\overline{u}(k+\partial)^2-4k^2\overline{u}+2(k+\partial)\overline{u}(k-\partial).
\end{equation}
and
\begin{equation}\label{u3-1}
\begin{split}
\overline{u_{xxx}}=&\overline{u}(-k-\partial)^3+(-k+\partial)^3\overline{u}+8k^3\overline{u}+(-k-\partial)(3\overline{u_x}-6k \overline{u})(-k+\partial)\\
=&-\overline{u}(k+\partial)^3-(k-\partial)^3\overline{u}+8k^3\overline{u}+(k+\partial)(3\overline{u_x}-6k \overline{u})(k-\partial).
\end{split}
\end{equation}
Using, \eqref{u3}, \eqref{u3-1}, \eqref{eq2sc2} and  \eqref{eq2sc2conj} it is not dificult to see that 
\begin{equation}\label{der3u3-1}
\begin{split}
 R^{-}(u_{xxx})R^{+}\overline{u} +R^{-}uR^{+}(\overline{u_{xxx}})=& R^{-}(k+\partial)^3uR^{+}\overline{u}+3u_x\overline{u}+6k|u|^2\\
 & -R^{-}uR^{+}\overline{u}(k+\partial)^3+R^{-}(3u\overline{u_x}-6k |u|^2) (k-\partial),
\end{split}
\end{equation}
and 
\begin{equation}\label{der2u3-1}
\begin{split}
 R^{-}(iu_{xx})R^{+}\overline{u} +R^{-}uR^{+}(\overline{iu_{xx}})=i\left(R^{-}\mathcal{A}uR^{+}\overline{u}+2|u|^2 -R^{-}uR^{+}\overline{u}\mathcal{A}-2R^{-}(|u|^2) (k-\partial)\right),
\end{split}
\end{equation}
where $\mathcal{A}=(k+\partial)^2$.

From \eqref{der3u3-1} and \eqref{der2u3-1} we have that
\begin{equation}\label{xder2u3-1}
R^{-}L(u)R^{+}\overline{u} +R^{-}uR^{+}\overline{L(u)}=R^{-}\mathcal{B} uR^{+}\overline{u} - R^{-}uR^{+}\overline{u}\mathcal{B}+F_1(k)+R^{-}\overline{F_1(-k) }(k-\partial),
\end{equation}
where $F_1(k)=2ai|u|^2+ 3bu_x\overline{u}+6bk|u|^2$ and $\mathcal{B}= ia\mathcal{A}+b(k+\partial)^3$, using  Property \eqref{Prop1Tn} we have
\begin{equation}\label{eqB1}
\text{Re}\,\text{tr} \!\left\{ \left(R^{-}uR^{+}\overline{u}\right)^{l}\left(  R^{-}\mathcal{B} uR^{+}\overline{u}\right)\right\}=\text{Re}\,\text{tr} \!\left\{ \left(  R^{-}\mathcal{B}uR^{+}\overline{u}\right)\left(R^{-}uR^{+}\overline{u}\right)^{l}\right\},
\end{equation}
and using Property \eqref{Prop2Tn} we get
\begin{equation}\label{eqB2}
\text{Re}\,\text{tr} \!\left\{ \left(R^{-}uR^{+}\overline{u}\right)^{l}\left(  R^{-}uR^{+}\overline{u}\mathcal{B}\right)\right\}=\text{Re}\,\text{tr} \!\left\{ \left(  R^{-}\mathcal{B}uR^{+}\overline{u}\right)\left(R^{-}uR^{+}\overline{u}\right)^{l}\right\}
\end{equation}
consequently from \eqref{xder2u3-1}, \eqref{eqB1} and \eqref{eqB2} it follows that
\begin{equation}\label{Fnlin3}
\begin{split}
\text{Re}\,&\text{tr} \!\left\{ \left(R^{-}uR^{+}\overline{u}\right)^{l}\left(  R^{-}L(u)R^{+}\overline{u} +R^{-}uR^{+}\overline{L(u)}\right)\right\}
\!=\!\text{Re}\,\text{tr}\! \left\{\left( R^{-}uR^{+}\overline{u}\right)^{l-1} R^{-}uR^{+} \overline{u}F_1(k)\right\}
\\
&+\text{Re}\,\text{tr} \left\{ R^{-}uR^{+}\overline{u}\left( R^{-}uR^{+}\overline{u}\right)^{l-1} R^{-}\overline{F_1(-k) }(k-\partial)\right\}\\
=&
\text{Re}\,\text{tr}\! \left\{\left( R^{-}uR^{+}\overline{u}\right)^{l-1} R^{-}uR^{+} (\overline{u}F_1(k))\right\}
+\text{Re}\,\text{tr} \left\{ \left(R^{-}u R^{+}\overline{u}\right)^{l-1} R^{-}u\left(\overline{F_1(-k) } \right)R^{+}\overline{u}\right\}
\end{split}
\end{equation}
 in the last equality was used  \eqref{Prop1Tn} (with the permutation $\sigma(n)=2$, $\sigma(n-1)=1$ and $\sigma(j)=j+2, j=1, \dots n-2$, similar as in \eqref{eqB1}). 
Now, we will to consider the term with $|u|^2u_x$ of $F(u)$, considering k:=-k and taking complex conjugate in \eqref{nonl2}, we obtain
\begin{equation}\label{nonl2conj}
\begin{split}
2|u|^2\overline{u_x}=&-(|u|^2\overline{u}) (-k+\partial)- (-k-\partial)(|u|^2\overline{u})-\overline{u}^2(u_x+2ku)\\=&(k+\partial)|u|^2\overline{u}+(|u|^2\overline{u})(k-\partial)-\overline{u}^2(u_x+2ku),
\end{split}
\end{equation}


By \eqref{nonl2} and \eqref{nonl2conj} we get
\begin{equation}\label{xNonlx1}
\begin{split}
 R^{-}(2|u|^2u_x)R^{+}\overline{u} &+R^{-}uR^{+}(2|u|^2\overline{u_x})= R^{-}(-|u|^4)-|u|^2u R^{+}\overline{u}+ R^{-}(-u^2\overline{u_x}+2k|u|^2 u)R^{+}\overline{u}\\
&+ R^{-}(|u|^4) +R^{-}u R^{+}(|u|^2\overline{u})(k-\partial)+R^{-}uR^{+}(-\overline{u}^2u_x-2k|u|^2\overline{u})
\end{split}
\end{equation}

and using the properties \eqref{Prop1Tn} we have
\begin{equation}\label{Nonlx1}
\begin{split}
\text{Re}&\, \text{tr} \left\{ \left(R^{-}uR^{+}\overline{u}\right)^{l-1}\left(  R^{-}(2|u|^2u_x)R^{+}\overline{u} +R^{-}uR^{+}(2|u|^2\overline{u_x}\right)\right\}=\\
&\text{Re}\, \text{tr} \left\{ \left(R^{-}uR^{+}\overline{u}\right)^{l-1}\left(  R^{-}(-u^2\overline{u_x}+2k|u|^2 u)R^{+}\overline{u} +R^{-}uR^{+}(-\overline{u}^2u_x-2k|u|^2\overline{u}\right)\right\}
\end{split}
\end{equation}
where the second term in RHS\eqref{xNonlx1} in the first line is canceled with the second term in RHS\eqref{xNonlx1} in the second line (using \eqref{Prop1Tn}, similar as in \eqref{eqB1}.
Therefore
\begin{equation}\label{Nonlx3}
\begin{split}
&\text{Re}\, \text{tr} \left\{ \left(R^{-}uR^{+}\overline{u}\right)^{l-1}\left(  R^{-}(F(u))R^{+}\overline{u} +R^{-}uR^{+}(\overline{F(u)}\right)\right\}=\\
&\text{Re}\, \text{tr} \left\{ \left(R^{-}uR^{+}\overline{u}\right)^{l-1}\left(  R^{-}( - u\left(\overline{F_1(-k) } \right)  )R^{+}\overline{u} +R^{-}uR^{+}(-\overline{u}F_1(k)\right)\right\},
\end{split}
\end{equation}
where $\overline{u}F_1(k)=2ia|u|^2\overline{u}+3b\overline{u}^2u_x+6k b|u|^2\overline{u}$. This equality together with \eqref{Fnlin3} prove equality \eqref{Fnlin1} and therefore proves the theorem

\end{proof}
\begin{proposition}\label{Rem1} Let $u$ solution of the IVP  \eqref{honse} and $v$ such that
\begin{equation}\label{gauge}
u(x,t) := v(x+d_1t, t)e^{i(d_2x+d_3t)}.
\end{equation}
where $d_j\in \R$, $j=1,2,3$. If $\dfrac{d\alpha(u(t),k)}{dt}=0$, then $\dfrac{d\alpha(v(t),k)}{dt}=0$.
\end{proposition}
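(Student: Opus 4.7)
My plan is to derive the equation satisfied by $v$, observe that after two further elementary gauge transformations it becomes the h-NLS equation with a shifted coefficient, and then invoke Theorem \ref{alphaHonse} for that transformed solution, transferring conservation back to $v$ via elementary invariance properties of $\alpha$.

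Substituting the ansatz $u(x,t) = v(x + d_1 t, t) e^{i(d_2 x + d_3 t)}$ into \eqref{xhonse} and expanding $\partial_t u$, $\partial_x^2 u$, and $\partial_x^3 u$ via the chain rule, straightforward bookkeeping yields that $v(y,t)$ satisfies
\begin{equation*}
\partial_t v + i a' \partial_y^2 v + b \partial_y^3 v = 2 i a' |v|^2 v + 6 b |v|^2 \partial_y v - c_1 \partial_y v - i c_2 v,
\end{equation*}
with $a' = a + 3 b d_2$, $c_1 = d_1 - 2 a d_2 - 3 b d_2^2$, and $c_2 = d_3 - a d_2^2 - b d_2^3$. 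This is the h-NLS equation with the coefficient $a$ replaced by $a'$, perturbed by a linear transport term $c_1 \partial_y v$ and a zero-order potential $i c_2 v$.

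Next, I absorb these perturbations into a further gauge by setting $w(y,t) := e^{i c_2 t} v(y + c_1 t, t)$. A direct computation shows that $w$ satisfies the pure h-NLS equation with coefficients $(a', b)$: the spatial translation kills the transport term, the unimodular phase kills the potential, and both transformations leave $|v|^2$ invariant so that the nonlinear terms map cleanly into $2 i a' |w|^2 w + 6 b |w|^2 \partial_y w$. Since the proof of Theorem \ref{alphaHonse} is purely structural in $(a, b)$, it applies verbatim to $w$ with $a'$ in place of $a$, yielding $\frac{d}{dt}\alpha(w(t),k) = 0$ for every $k > 0$.

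At each fixed $t$, $w(\cdot,t)$ differs from $v(\cdot,t)$ only by a spatial translation and multiplication by the unimodular scalar $e^{i c_2 t}$, and both operations leave $\alpha(\cdot, k)$ invariant. For the translation, conjugating by the unitary shift operator (which commutes with the Fourier multipliers $R^\pm$) preserves each trace $\text{tr}[(R^- f R^+ \overline f)^l]$. For the constant phase, the factor $\lambda$ enters $M_v$ while $\overline\lambda$ enters $M_{\overline v}$, so the contributions combine to $|\lambda|^2 = 1$ at each appearance. Hence $\alpha(v(t),k) = \alpha(w(t),k)$ and conservation transfers to $v$. The main technical step is the chain-rule bookkeeping in deriving $v$'s equation; once the shifted coefficients $a', c_1, c_2$ are correctly identified, the remaining steps are immediate.
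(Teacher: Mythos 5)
Your argument is correct, but it takes a genuinely different route from the paper. The paper never derives the equation satisfied by $v$: it writes $\alpha(v(t),k)$ as $\operatorname{Re}\sum_l \frac{(-1)^{l-1}}{l}\mathrm{tr}\{(R^- u e^{-id_2x}R^+\overline{u}e^{id_2x})^l\}$, uses the cyclicity property \eqref{Prop1Tn} to absorb the oscillating factors into modified operators $\mathcal{R}^{\pm}=e^{\mp id_2x}R^{\pm}$, and then reruns the whole trace computation of Theorem \ref{alphaHonse} with $R^{\pm}$ replaced by $\mathcal{R}^{\pm}$, using the equation for $u$ itself. You instead work on the solution side: you compute the equation for $v$ (which is exactly what the paper records separately in equation \eqref{xhonse1} of the subsequent Remark), gauge away the residual transport and potential terms by $w(y,t)=e^{ic_2t}v(y+c_1t,t)$ so that $w$ solves the pure h-NLS with coefficient $a'=a+3bd_2$, invoke Theorem \ref{alphaHonse} as a black box (legitimate, since that theorem holds for arbitrary real $a,b$), and transfer conservation back through the translation- and constant-phase-invariance of $\alpha$ (both correctly justified via conjugation by the shift, commutation with the Fourier multipliers $R^{\pm}$, cyclicity \eqref{ConmTr}, and $|\lambda|^2=1$). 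Your route buys modularity --- no operator identities need to be recomputed --- and it is in fact well adapted to the only place the proposition is used, namely Proposition \ref{gwpapr}, where the specific choices in \eqref{tta} make $c_1=c_2=0$ so that $u_n$ already solves the pure h-NLS \eqref{honse3}. Two caveats are worth stating explicitly if you write this up: first, you prove the conclusion for $u$ solving the h-NLS \eqref{xhonse}, not the conditional statement for a general solution of \eqref{honse} as literally phrased (though the paper's own proof has the same effective scope, since rerunning Theorem \ref{alphaHonse} also requires the h-NLS nonlinearity); second, your hypothesis ``$\frac{d}{dt}\alpha(u(t),k)=0$'' is never used --- you reprove conservation rather than transfer it --- which is harmless here but means your statement is really an unconditional one about gauge transforms of h-NLS solutions.
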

\begin{proof}
By the definition of $\alpha$ and \eqref{eq1}, holds
\begin{equation}
\alpha(v(t),k):=\text{Re} \sum_{l=1}^{\infty} \dfrac{(-1)^{l-1}}{l} \text{tr} \left\{   \left(R^{-}ue^{-id_2x}R^{+}\overline{u}e^{id_2x}\right)^l\right\}.
\end{equation}
and using Property \eqref{Prop1Tn} we have
\begin{equation}
\alpha(u(t),k):=\text{Re} \sum_{l=1}^{\infty} \dfrac{(-1)^{l-1}}{l} \text{tr} \left\{   \left(\mathcal{R}^{-}u\mathcal{R}^{+}\overline{u}\right)^l\right\}.
\end{equation}
where $\mathcal{R}^{\pm}=e^{\mp id_2x} R^{\pm}$. The rest of the proof is equal to the proof of Theorem \ref{alphaHonse}, replacing $R^{-}$ by $\mathcal{R}^{-}$ and $R^{+}$ by $\mathcal{R}^{+}$.
\end{proof}
\begin{remark}
 We consider the following Gauge transform 
\begin{equation}\label{gauge}
u(x,t) := v(x+d_1t, t)e^{i(d_2x+d_3t)}.
\end{equation}

Using this transformation the IVP \eqref{honse} turns out to be
\begin{equation}\label{xhonse1} 
\!\!\!\!\!\!\!\begin{cases}
\partial_{t}v+ b\partial^{3}_{x}v+i(a +3bd_2)
\partial^{2}_{x}v+i(d_3-bd_2^3-a d_2^2)v+(d_1-3bd_2^2-2a d_2)\partial_x v  \\
+ i(c_1+c_2d_2) |v|^2v  +c_2|v|^2v_x=  0,\\
v(x,0) = v_0(x) := u_0(x)e^{-id_2x}.
\end{cases}
\end{equation} 
   
 If one chooses $d_1=-\frac{a^2}{3b}$, $d_2= \frac{-a}{3b}$, $d_3=\frac{2a^3}{27b^2}$, the third, fourth, fifth terms in the first equation in \eqref{xhonse1} vanish i.e.
 \begin{equation}\label{e-nls1} 
\!\!\!\!\!\!\!\begin{cases}
\partial_{t}v+ b\partial^{3}_{x}v + ic_3 |v|^2v+c_2 |v|^2v_x=  0,\\
v(x,0) = v_0(x) := u_0(x)e^{iax/(3b)},
\end{cases}
\end{equation} 
where $c_3=c_1-\frac{ac_2}{3b}$.
 Also, we note that
 $$\|u_0\|_{H^s}\sim \|v_0\|_{H^s}\quad \mathrm{and}\quad \|u\|_{Z^{s,b}} \sim \|v\|_{X^{s,b}},
 $$
 where $X^{s,b}$ is the Fourier transform norm space with phase function $\xi^3$, i.e., with norm
 $$\|u\|_{X^{s,b}}:=\|\langle\xi\rangle^s\langle\tau-\xi^3\rangle^b\widehat{u}(\xi, \tau)\|_{L^2_{\xi}L^2_{\tau}}.$$
This observation allows one to consider the parameter $a =0$.     
\end{remark}

\section{Apriori estimates}\label{sec-5}

\begin{proposition}\label{gwpapr}
Let $2\leq p<\infty$ and $u\in S(\R)$ a solution to \eqref{xhonse}. Then, there exists $C=C(p)$ positive such that
\begin{equation}
\|u(t)\|_{M^{2,p}(\R)}\leq C\left( 1+\|u(0)\|_{M^{2,p}(\R)}  \right)^{p/2-1}\|u(0)\|_{M^{2,p}(\R)},
\end{equation}
for any $t\in \R$.
\end{proposition}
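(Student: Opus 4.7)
The plan is to use the one-parameter family of conserved quantities $\{\alpha(u(t),k)\}_{k>0}$ established in Theorem \ref{alphaHonse} to control $\|u(t)\|_{M^{2,p}(\R)}$, following the strategy of Killip--Visan--Zhang and Oh--Wang. The case $p=2$ reduces to the $L^2$-conservation law, since $M^{2,2}(\R)=L^2(\R)$ and $\|u(t)\|_{L^2}$ is conserved under the flow of \eqref{xhonse} (the factor $(1+\|u(0)\|)^{p/2-1}=1$ in that case). We therefore focus on $p>2$.

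The first step is to isolate the leading quadratic term in the series defining $\alpha(u,k)$. By Lemma \ref{lemaPedr2} the $l=1$ term equals
\[
\alpha^{(1)}(u,k)=\operatorname{Re\,tr}\{R^{-}uR^{+}\overline u\}=2kc\int_{\R}\frac{|\widehat u(\xi)|^{2}}{4k^{2}+\xi^{2}}\,d\xi.
\]
For $\xi$ in a unit frequency interval $[n,n+1]$ the weight $\tfrac{k}{4k^{2}+\xi^{2}}$ is of order $\tfrac{k}{k^{2}+n^{2}}$, so by choosing $k\sim |n|$, or more robustly by taking finite differences $\alpha^{(1)}(u,2k)-\alpha^{(1)}(u,k)$ along dyadic scales $k\sim 2^{j}$, one can localize the frequency contributions and recover $\|\Pi_{n}u\|_{L^{2}}^{2}$, up to harmless weights, from this quadratic expression.

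Next, the higher-order terms ($l\geq 2$) are treated as error terms using the trace-product estimate \eqref{prodtraceTn}, the symmetry \eqref{ignorm}, and Lemma \ref{lemaPedr}: writing each factor as $(k-\partial)^{-1/2}u(k+\partial)^{-1/2}$ and its conjugate, we obtain
\[
\bigl|\operatorname{tr}\{(R^{-}uR^{+}\overline u)^{l}\}\bigr|\leq \|(k-\partial)^{-1/2}u(k+\partial)^{-1/2}\|^{2l}\lesssim \left(\int_{\R}\frac{|\widehat u(\xi)|^{2}}{|k|+|\xi|}\,d\xi\right)^{l}.
\]
The resulting geometric series converges provided $\int \tfrac{|\widehat u|^{2}}{|k|+|\xi|}d\xi \leq \tfrac12$, which can be arranged for each $n$ by choosing $k\sim|n|$ sufficiently large (essentially reducing to the high-frequency regime). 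This yields, at each scale $k$, the relation
\[
\alpha^{(1)}(u(t),k)\leq \alpha(u(t),k)+ C\,\|u(t)\|_{\mathrm{weighted}}^{4}=\alpha(u(0),k)+C\,\|u(t)\|_{\mathrm{weighted}}^{4},
\]
by Theorem \ref{alphaHonse}.

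The final step is to assemble these scale-by-scale estimates into an $\ell^{p}$ bound. Setting $a_{n}(t):=\|\Pi_{n}u(t)\|_{L^{2}}^{2}$, one has $\|u(t)\|_{M^{2,p}}^{p}=\sum_{n}a_{n}(t)^{p/2}$. The weighted inequality above, summed or integrated against a suitable measure in $k$ and combined with H\"older's inequality with exponent $p/2$, produces an estimate of the form
\[
\|u(t)\|_{M^{2,p}}^{2}\lesssim \|u(0)\|_{M^{2,p}}^{2}+C\,\bigl(1+\|u(0)\|_{M^{2,p}}\bigr)^{p-2}\|u(0)\|_{M^{2,p}}^{2},
\]
after absorbing the higher-order corrections on the left via continuity in time (using local well-posedness from \cite{CP}) and a bootstrap, giving the advertised bound. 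The main obstacle is exactly this reconstruction step: the passage from the one-parameter family of scalar identities $\alpha(u(t),k)=\alpha(u(0),k)$ to the $\ell^{p}$-summed modulation norm $\sum_{n}\|\Pi_{n}u\|_{L^{2}}^{p}$. The exponent $p/2-1$ appearing in the statement is the signature of this H\"older step combined with the control of the nonlinear correction terms; making it rigorous is the heart of the Oh--Wang argument and must be adapted here to accommodate the cubic nonlinearity $|u|^{2}\partial_{x}u$ with a derivative, which is the principal technical complication compared to the NLS setting.
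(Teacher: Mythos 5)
Your overall philosophy (conservation of the perturbation determinant, bounding the tail $l\geq 2$ by powers of the Hilbert--Schmidt norm from Lemma \ref{lemaPedr}, then a continuity/bootstrap argument) matches the paper, but two structural pillars of the actual proof are missing, and the substitute you propose for the first one does not work. The paper does \emph{not} recover the unit-scale frequency localization by varying $k$ (dyadically or by taking $k\sim|n|$): the weight $\tfrac{k}{4k^2+\xi^2}$ in Lemma \ref{lemaPedr2} is even in $\xi$ and peaked at $\xi=0$, so sweeping $k$ over dyadic scales can only produce Littlewood--Paley (annular) information about $|\xi|$, never the localization to unit intervals $[n,n+1]$ that the $M^{2,p}$ norm requires; passing from dyadic-block $L^2$ data to the $\ell^p_n$ sum over unit intervals loses a factor $N^{1/2-1/p}$ per block when $p>2$, which is exactly the loss one cannot afford. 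What the paper does instead is introduce, for every $n\in\Z$, the Galilean-boosted solution $u_n$ in \eqref{tta}, which solves the h-NLS-type equation \eqref{honse3} and satisfies $|\widehat{u_n}(\xi,t)|=|\widehat{u}(\xi+n,t)|$ by \eqref{ign}; Theorem \ref{alphaHonse} together with Proposition \ref{Rem1} gives conservation of $\alpha(u_n(t),k)$ for each $n$ at a \emph{fixed} $k$, and it is the $\ell_n^{p/2}$ sum over this family of conserved quantities (estimates \eqref{mmj}--\eqref{essd}) that reconstructs $\|u(t)\|_{M^{2,p}}^2$. Without this family of frequency translates your ``reconstruction step'' has no mechanism to produce the $\ell^p_n$ sum, and indeed you yourself flag it as the unresolved heart of the argument.

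The second gap is the source of the exponent $p/2-1$. It does not come from a H\"older step in the reconstruction: the small-data a priori bound obtained via the scheme above is simply $\|u(t)\|_{M^{2,p}}\lesssim\|u(0)\|_{M^{2,p}}$, valid under $\|u_0\|_{M^{2,p}}<\epsilon\ll1$ (smallness is also what makes the geometric series in $l$ converge, via \eqref{mma}--\eqref{mmg}; choosing $k$ large does not achieve this uniformly, since the implicit constants and the leading term degenerate in $k$). The general case is then reduced to the small-data case by the scaling $u_\lambda(x,t)=\lambda^{-1}u(\lambda^{-1}x,\lambda^{-3}t)$, for which $\|u_\lambda(\cdot,0)\|_{M^{2,p}}\lesssim\lambda^{-1/p}\|u(\cdot,0)\|_{M^{2,p}}$ as in \eqref{x0mmj}; choosing $\lambda\sim(1+\|u_0\|_{M^{2,p}})^p$ and undoing the scaling as in \eqref{x1mmj} costs $\lambda^{1/2-1/p}\sim(1+\|u_0\|_{M^{2,p}})^{p/2-1}$, which is precisely the constant in the statement. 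Your proposal contains neither the small-data reduction nor the scaling step, so even granting the reconstruction it would not yield the stated inequality with that exponent.
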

\begin{proof}
The proof is done in two steps.
Initially we will to consider the initial data 
\begin{equation}\label{dadoi}
\|u_0\|_{M^{2,p}}<\epsilon\ll1.
\end{equation}

Let $u$ solution of equation \eqref{modeloHONSE}, for any $n\in \mathbb{Z}$ we define $u_n$ such that
\begin{equation}\label{tta}
u_n(x, t) = e^{- i n x}e^{i (a n^2 + 2 b n^3) t}u(x - (2 a n + 3 b n^2)t, t)
\end{equation}
then $u_n$ is a solution of
\begin{equation}\label{honse3}
\begin{cases} 
\partial_{t}v+iA 
\partial^{2}_{x}v+ b\partial^{3}_{x}v=2iA |v|^{2}v+6b|v|^{2}\partial_{x}v,\quad x,t \in \R,\\
 v(x,0) = e^{-inx}u(x,0),
 \end{cases}
\end{equation}
where $A= a+3bn$, and also have $\widehat{u}(\xi,t)=e^{i(an^2+bn^3)t}e^{i(2an+3bn^2)(\xi-n)t}\widehat{u_n}(\xi-n,t)$, thus
\begin{equation}\label{ign}
|\widehat{u_n}(\xi,t)|=|\widehat{u}(\xi+n,t)|.
\end{equation}
Theorem \ref{alphaHonse} and Remark \ref{Rem1}  imply that
\begin{equation}\label{dze}
\dfrac{d\alpha (u_n(t))}{dt}=0,
\end{equation}

for any $n\in \mathbb{Z}$ and $\xi,t \in \R$.

We denote  $\alpha(u_{n}(t)):= \alpha( u_{n}(t), k)$, $k>0$, from the definition of $\alpha$ we get

\begin{equation}\label{dpa}
\alpha(u_{n}(t)) =\text{Re} \sum_{l=1}^{\infty} \dfrac{(-1)^{l-1}}{l} \text{tr} \left\{  \left( (k - \partial_{x})^{- \frac 12} u_{n} (k + \partial_{x})^{- 1} \overline{u_{n}} (k - \partial_{x})^{-\frac12} \right)^{l}\right\}.
\end{equation}

Using \eqref{ConmTr}
\[
\text{tr}  \left( ( k - \partial_{x})^{- \frac 12}u_{n} (k + \partial_{x})^{- 1} \overline{u_{n}} (k - \partial_{x})^{- \frac 12} \right) =\text{tr}  \left( ( k - \partial_{x})^{- 1}u_{n} (k + \partial_{x})^{- 1} \overline{u_{n}}  \right), 
\]
 from \eqref{Pedrx2} and  \eqref{dpa}, we get

\begin{equation}\label{dpf}
\alpha(u_n(t)) = 2kc\int_{\R}\dfrac{|\widehat{u}_{n}(\xi, t)|^2}{4k^2 + \xi^2}\,d \xi +  \text{Re}\sum_{l=2}^{\infty} \dfrac{(-1)^{l-1}}{l} \text{tr}\left\{ \left[\,M^{-}u_{n}{ M^{+}}M^+\overline{u}_{n}M^-\,\right ]^{l}\,\right\}
\end{equation}
where $M^{\mp}=(k\mp\partial_x)^{-\frac12}$,
from \eqref{dpf} and \eqref{ignorm}, we get

\begin{equation}\label{dpg}
\left |\,\alpha(u_n(t)) - 2kc\int_{\R}\dfrac{|\widehat{u}_{n}(\xi, t)|^2}{4k^2 + \xi^2}\,d \xi\,\right | \leq \sum_{l=2}^{\infty}\dfrac{1}{l}\,||M^-u_{n} M^+ ||^{2 l}.
\end{equation}

However of Lemma \ref{lemaPedr},

\begin{equation}\label{dda}
||M^- u_{n} M^+ ||^{2 l}\, \lesssim\, \left [ \int_{\R} \dfrac{|\widehat{u}_{n}(\xi, t)|^2}{|k|+|\xi| }\,d \xi \right ]^{l}.
\end{equation}
then from \eqref{dpg} and \eqref{dda}, we get

\begin{equation}\label{ffa}
\left |\,\alpha(u_n(t)) - 2kc\int_{\R}\dfrac{|\widehat{u}_{n}(\xi, t)|^2}{4k^2 + \xi^2}\,d \xi\,\right | \lesssim \sum_{l=2}^{\infty}  \left [ \int_{\R} <\xi>^{-1}|\widehat{u}_{n}(\xi, t)|^2\, d \xi \right ]^{l}.
\end{equation} 

Using \eqref{ign}, making a change of variables in \eqref{ffa}, we get

\begin{equation}\label{ffb}
\left |\,\alpha(u_n(t)) - 2kc\int_{\R}\dfrac{|\widehat{u}_{n}(\xi, t)|^2}{4k^2 + \xi^2}\,d \xi\,\right | \,\lesssim\, \sum_{l=2}^{\infty}  \left [ \int_{\R} <\xi - n>^{-1}|\widehat{u}(\xi, t)|^2\, d \xi \right ]^{l}.
\end{equation} 

For each $j\in\mathbb{Z}$ consider the interval
\[
I_{j} = [ j - \frac 12, j + \frac 12 [
\]

Next, we will estimate the right side of \eqref{ffb}, when time $t = 0$. In this sense, we olserve that

\begin{equation}\label{gga}
\int_{\R} <\xi - n>^{-1}|\widehat{u}(\xi, 0)|^2\, d \xi \;\sim\;\sum_{j\in\mathbb{Z}}< j - n>^{- 1 }\int_{I_{j}}\,|\widehat{u}(\xi, 0)|^2\,d \xi.
\end{equation}

However, if $p \geq 2$ and $1<q \leq \infty$ such that $\frac{2}{p} + \frac{1}{q} = 1$,  using  H\"older's inequality, we get
\begin{equation}\label{ggb}
\begin{split}
\sum_{j\in\mathbb{Z}}< j - n>^{- 1 }\|\widehat{u}(\xi, 0)\|_{L^2(I_j)}^2 &\lesssim
 \|<j-n>^{- 1}\|_{\ell_j^q}\,\| \,\|\widehat{u}(\xi, 0)\|_{L^2(I_j)}^2 \|_{\ell_j^{\frac{ p}2}(\Z)}\\
&\lesssim \|\, \|\widehat{u}(\xi, 0)\|_{L^2(I_j)} \|_{\ell_j^{p}(\Z)}^2\, \sim \,\|\,u(0)\|_{M^{2,p}}^2
\end{split}
\end{equation}

uniformly in $n\in \Z$.

By continuity of the solution operator, there exists a neighborhood $ I_{\lambda} = ]- \lambda, \lambda [ $  around $t=0$  such that

\begin{equation}\label{mma}
\| \,u(t)\,\|_{M^{2,p}}\, \leq\,\epsilon\ll 1.
\end{equation}

Applying similar ideas to obtain \eqref{ggb} , we have

\begin{equation}\label{mmb}
r_{n}(t)\,:=\,\int_{\R} <\xi - n>^{-1}|\widehat{u}(\xi, t)|^2\, d \xi\,\lesssim\,\| u(t) \|_{M^{2,p}}^2.
\end{equation}

Note that  $\sum_{2}^{\infty}\,( r_{n}(t) )^{\textit{l}}$  it is a geometric series of ratio $ 0 \leq r_{n}(t) \leq \epsilon \ll 1$ and $t\in I_{\lambda}$ fixed any. 
Then

 From \eqref{ffb}, \eqref{mmb}, we have

\begin{equation}\label{mmg}
\left |\,\alpha(u_n(t)) - 2kc\int_{\R}\dfrac{|\widehat{u}_{n}(\xi, t)|^2}{4k^2 + \xi^2}\,d \xi\,\right | \,\lesssim\,\sum_{l=2}^{\infty}\,( r_{n}(t) )^{\textit{l}}\,=\, \dfrac{( r_{n}(t) )^2}{1-r_n(t)}\,\lesssim\, ( r_{n}(t) )^2,
\end{equation}
for all $n\in\mathbb{Z},\;t\in I_{\lambda}$. Thus using Young's inequality to series, considering $\frac1p+1=\frac1q+\frac2p$ ($q=\frac{p}{p-1}$), we have

\begin {equation}\label{mmj}
\begin{split}
\left\|\alpha(u_{n}(t)) - 2kc\int_{\R}\dfrac{|\widehat{u}_{n}(\xi, t)|^2}{4k^2 + \xi^2}\,d \xi\,\right\|_{\ell_n^{p/2}}&\,\lesssim\,\left\| \int_{\R} <\xi - n>^{-1}|\widehat{u}(\xi, t)|^2\, d \xi \right\|_{\ell_{n}^{p}}^{2}\\
&\,\lesssim\,\left\| \sum_{j\in\mathbb{Z}}< n - j>^{- 1 }\int_{I_{j}}\,|\widehat{u}(\xi, t)|^2\,d \xi.\right\|_{\ell_{n}^{p}}^{2}\\
&\,\lesssim\,\|\,\| \widehat{u}(\xi, t) \|_{L_{\xi}^{2}(I_j)}^{2}\,\|_{\ell_{j}^{p/2}}^{2}\\
&\, \sim\, \| u(t)\|_{M^{2,p}}^{4}
\end{split}
\end{equation}
evenly for all $t\in I_{\lambda}$.


On the other hand, changing the variable $ \mu = \xi - n $, we get

\begin{equation}\label{essa}
\,\left[ \sum_{n\in\mathbb{Z}} \| <\xi - n>^{- 1}\widehat{u}(\xi, t)\|_{L_{\xi}^{2}}^{p} \right]^{2/p}=\, \left\| \int_{\R}\dfrac{|\widehat{u}_n(\mu, t)|^2}{\langle \mu \rangle^2}\,d \mu \right\|_{\ell_{n}^{p/2}(\mathbb{Z})}\sim \| u(t) \|_{M^{2,p}}^{2}.
\end{equation}

From \eqref{mmj} and \eqref{essa}, we get

\begin{equation}\label{essd}
\left\|2kc\int_{\R}\dfrac{|\widehat{u}_{n}(\xi, t)|^2}{4k^2 + \xi^2}\,d \xi\right\|_{\ell_{n}^{p/2}} \,\sim_k\| u(t) \|_{M^{2,p}}^{2}\,\lesssim\,\| \alpha(u_{n}(t)) \|_{\ell_{n}^{p/2}} + \| u(t) \|_{M^{2,p}}^{4}.
\end{equation}

Furthermore by \eqref{mmj} and \eqref{essd}

\begin{equation}\label{essf}
\| \alpha(u_{n}(t)) \|_{\ell_{n}^{p/2}} \,\lesssim\,\| u(t)\|_{M^{2,p}}^{4} + \| u(t) \|_{M^{2,p}}^{2}.
\end{equation}

Note that

\begin{equation}\label{essh}
  \| u(t) \|_{M^{2,p}}^{2} \,\lesssim\, \| \alpha(u_{n}(t)) \|_{\ell_{n}^{p/2}} + \| u(t)\|_{M^{2,p}}^{4}.
\end{equation}

From \eqref{essf}, \eqref{essh} and given that $\alpha$  is conserved, we get

\begin{equation}\label{essk}
\| u(t) \|_{M^{2,p}}^{2} \,\lesssim\,\| u(0)\|_{M^{2,p}}^{4} + \| u(0) \|_{M^{2,p}}^{2} + \|u(t) \|_{M^{2,p}}^{4}.
\end{equation} 


for all $ t\in I_{\lambda}$.

From \eqref{mma} and the continuity, we get

\begin{equation}\label{essq}
|| u(t) ||_{M^{2,p}}^{2} \,\lesssim\,\| u(0) \|_{M^{2,p}}^{2}.
\end{equation}
evenly for all $t\in\R $.
Finally we will consider the general case, let $u_{\lambda}(x,t)=\lambda^{-1}u(\lambda^{-1}x, \lambda^{-3}t)$, then $v:=u_{\lambda}$ is a solution of
\begin{equation}
v_t+ia\lambda^{-1}v_{xx}+bv_{xxx}=2ia\lambda^{-1}|v|^2v+6b|v|^2v_x,
\end{equation}
 using the inequality $(c_1+ \cdots c_n)^{a} \leq n^{a-1} (c_1^a+ \cdots c_n^a)$, $c_j\geq0$, $j=1, \dots n$, we have
\begin {equation}\label{x0mmj}
\begin{split}
\|u_{\lambda}(\cdot, 0)|\|_{M^{2,p}}\sim&\lambda^{-1/2}\|\, \|\widehat{u}(\cdot,0)\|_{L^2(J_{\lambda,n} )}\,\|_{\ell^p_n},\qquad J_{\lambda,n}=\lambda I_n=[\lambda n-\frac\lambda2, n\lambda+\frac\lambda2]\\
=&\lambda^{-1/2}\left(\sum_{n\in \Z} \left(\sum_{j=0}^{\lambda-1}   \|\widehat{u}(\cdot,0)\|^2_{L^2(J_{\lambda,n,j})}\right)^{p/2}\right)^{1/p}\\
\leq& \lambda^{-1/p}\|u(\cdot,0)\|_{M^{2,p}}
\end{split}
\end{equation}
where $J_{\lambda,n,j}=[\lambda n-\frac\lambda2+j, n\lambda-\frac\lambda2+j+1]$, $n,\lambda \in \Z^+$, $0\leq j\leq \lambda -1$, hence choosing $\lambda \in Z^+$ such that
$$
\lambda \sim (1+\|u(\cdot,0)\|_{M^{2,p}})^p
$$
we obtain
$$
\|u_{\lambda}(\cdot, 0)\|_{M^{2,p}} <\epsilon \ll 1,
$$
 by the small data case presented above follows that.
\begin{equation}\label{Xav1}
\|u_{\lambda}(\cdot, t)|\|_{M^{2,p}} \lesssim \|u_{\lambda}(\cdot, 0)\|_{M^{2,p}}
\end{equation}
for all $t\in \R$.
By scaling, \eqref{Xav1} and \eqref{x0mmj} holds
\begin {equation}\label{x1mmj}
\begin{split}
\|u(t)\|_{M^{2,p}}\leq \lambda^{1/2}\|u_{\lambda}(\cdot,\lambda^3 t)\|_{M^{2,p}} \lesssim\lambda^{1/2}\|u_{\lambda}(\cdot,0)\|_{M^{2,p}} \lesssim\lambda^{1/2-1/p}\|u(\cdot,0)\|_{M^{2,p}}.
\end{split}
\end{equation}
\end{proof}
 This inequality proves the proposition.
\begin{proposition}\label{gwps}
Let $2\leq p<\infty$, $0 \leq s< 1- \frac1p$ and $u\in S(\R)$ a solution to \eqref{modeloHONSE}. Then, there exists $C=C(p)$ positive such that
\begin{equation}
\|u(t)\|_{M^{2,p}_s(\R)}\leq C\left( 1+\|u(0)\|_{M^{2,p}_s(\R)}  \right)^{p/2-1}\|u(0)\|_{M^{2,p}_s(\R)},
\end{equation}
for any $t\in \R$.
\end{proposition}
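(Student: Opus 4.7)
The plan is to mimic the two-step structure of Proposition \ref{gwpapr} --- a small-data argument driven by the pointwise conservation of $\alpha$, followed by a scaling reduction --- but to carry the weight $\langle n\rangle^s$ through every estimate.

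For the small-data regime $\|u_0\|_{M_s^{2,p}} < \epsilon \ll 1$, the key observation is that Theorem \ref{alphaHonse} combined with Proposition \ref{Rem1} yields the pointwise conservation $\alpha(u_n(t),k) = \alpha(u_n(0),k)$ for every $n \in \Z$, hence $\langle n\rangle^{2s}\alpha(u_n(t),k)$ is also time-independent $n$-by-$n$. I would then replicate the chain \eqref{mmg}--\eqref{essh} with the weight $\langle n\rangle^{2s}$ inserted before the outer $\ell_n^{p/2}$-norm. The two weighted estimates to establish are
\begin{equation*}
\Bigl\| \langle n\rangle^{2s} \int_\R \frac{|\widehat{u_n}(\mu,t)|^2}{4k^2 + \mu^2}\,d\mu \Bigr\|_{\ell_n^{p/2}} \sim_k \|u(t)\|_{M_s^{2,p}}^2,
\end{equation*}
together with a weighted counterpart of \eqref{mmj} whose right-hand side has the shape $\|u(t)\|_{M_s^{2,p}}^{2}\,\|u(t)\|_{M^{2,p}}^{2}$, so the small factor $\|u(t)\|_{M^{2,p}}^2$ can be absorbed. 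Both reduce to weighted convolution bounds after discretizing the $\mu$-integral against the partition $\{I_j\}_{j\in\Z}$, writing $\int \langle \mu - n\rangle^{-2}|\widehat{u}|^2 d\mu \sim \sum_j \langle n-j\rangle^{-2}a_j$ with $a_j := \|\widehat{u}\|_{L^2(I_j)}^2$, and then distributing the weight via $\langle n\rangle^{2s} \lesssim \langle n-j\rangle^{2s} + \langle j\rangle^{2s}$ followed by Young's inequality.

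Once the weighted small-data bound $\|u(t)\|_{M_s^{2,p}} \lesssim \|u(0)\|_{M_s^{2,p}}$ is in hand, I would remove the smallness assumption exactly as in the closing paragraph of Proposition \ref{gwpapr}. The rescaling $u_\lambda(x,t) := \lambda^{-1} u(\lambda^{-1}x, \lambda^{-3}t)$ preserves the equation structure, and a dyadic computation analogous to \eqref{x0mmj} shows that the $M_s^{2,p}$-norm scales like $\lambda^{s - 1/p}$ times the same dyadic summation correction used there. Choosing $\lambda \in \Z^+$ of order $(1+\|u_0\|_{M_s^{2,p}})^p$ pushes $u_\lambda(\cdot,0)$ into the small-data regime, and undoing the scaling after applying Step 1 delivers the claimed polynomial bound with exponent $p/2 - 1$.

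The main obstacle is the threshold $s < 1 - \frac1p$, which is dictated precisely by the weighted Young step. After the split $\langle n\rangle^{2s} \lesssim \langle n-j\rangle^{2s} + \langle j\rangle^{2s}$, the ``first half'' produces a convolution by $\langle k\rangle^{2s-2}$ against a sequence in $\ell^{p/2}$, and the resulting $\ell_n^{p/2}$-bound closes exactly when $s < 1 - 1/p$. Verifying that this same threshold suffices \emph{uniformly} for every higher-order remainder $\langle n\rangle^{2s} r_n(t)^l$, $l \geq 2$, in the geometric-series tail of \eqref{dpf} --- while still keeping the final polynomial dependence on $\|u(0)\|_{M_s^{2,p}}$ with the advertised exponent --- is the principal technical calculation.
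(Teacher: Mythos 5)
First, a point of comparison: the paper does not actually reprove this proposition --- its proof is the single line that it ``is the same proof as the proof of Theorem B1 (i) in \cite{OW-20}''. Your plan of carrying the weight $\langle n\rangle^{2s}$ through the argument of Proposition \ref{gwpapr} (pointwise-in-$n$ conservation of $\alpha(u_n(t),k)$ from Theorem \ref{alphaHonse} and Proposition \ref{Rem1}, weighted $\ell_n^{p/2}$ summation, absorption of the series tail using smallness of the \emph{unweighted} norm, then a scaling reduction) is indeed the spirit of that cited result, so the skeleton is the right one and is more explicit than what the paper offers.

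Two steps of your sketch, however, would not go through as described. (i) The threshold mechanism: after the split $\langle n\rangle^{2s}\lesssim\langle n-j\rangle^{2s}+\langle j\rangle^{2s}$, the ``first half'' is $\sum_j\langle n-j\rangle^{2s-2}a_j$ with $a_j=\|\widehat u\|^2_{L^2(I_j)}\in\ell^{p/2}$; a plain Young bound $\ell^1\ast\ell^{p/2}\to\ell^{p/2}$ requires $\langle\cdot\rangle^{2s-2}\in\ell^1$, i.e.\ $s<\frac12$, not $s<1-\frac1p$. To cover the full range (which is exactly the interesting case when $p>2$) you must keep the weight on the sequence and bound the two-variable kernel $\langle n-j\rangle^{2s-2}\langle j\rangle^{-2s}$ on $\ell^{p/2}$ (e.g.\ by splitting $|j|\lessgtr|n|/2$); the test sequence $a_j=\delta_{j0}$ shows the true source of the restriction is the $\ell^{p/2}$-summability of $\langle n\rangle^{2s-2}$, i.e.\ $(2-2s)\frac p2>1$. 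So the threshold you name is correct, but the tool you invoke (Young) cannot reach it. (ii) The scaling step: the weighted norm does not rescale as benignly as you assert. Undoing the scaling costs $\lambda^{\frac12+s}$, because dilating frequencies back multiplies the weight by $\lambda^{s}$ on every block $|n|\geq1$, while the forward rescaling of the data gains only $\lambda^{-\frac1p}$ in general: no compensating factor $\lambda^{-s}$ is available for the portion of $\widehat{u_0}$ supported in $[-\lambda/2,\lambda/2]$, which is absorbed into the single block $n=0$ with weight $1$. Naive bookkeeping therefore produces a growth exponent $p/2+sp-1$, not the advertised $p/2-1$; your claimed factor $\lambda^{s-1/p}$ and the assertion that ``undoing the scaling delivers the claimed polynomial bound with exponent $p/2-1$'' are not justified as written. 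Recovering the stated exponent requires an additional device --- for instance, proving the small-data step in the form ``$\|u(t)\|_{M^{2,p}}\leq\epsilon$ for all $t$ implies $\|u(t)\|_{M_s^{2,p}}\sim\|u(0)\|_{M_s^{2,p}}$'' and running the weighted estimates with weights adapted to the rescaling, uniformly in $\lambda$, as in the proof of Theorem B1 (i) of \cite{OW-20} to which the paper defers --- and this is precisely the point your sketch leaves unresolved.
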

\begin{proof}
Is the same proof as the proof of Theorem B1 (i) in \cite{OW-20}.
\end{proof}

{\bf  Proof of Theorem \ref{main-th2}}

The proof of Theorem \ref{main-th2} in the case $0\leq s\leq 1-\frac1p$, follows from Propositions  \ref{gwpapr} and \ref{gwps} and in the case $s>1-\frac1p$ the proof is similar as in Section 3.6 of \cite{Oh-Wang}.





\end{document}